\documentclass[11pt,leqno]{extarticle}
\usepackage[utf8]{inputenc}
\usepackage[margin=1in]{geometry}
\usepackage{enumitem,amssymb,amsmath,xcolor,cancel,hyperref,graphicx,etoolbox,setspace,subcaption,lmodern,yfonts,float,colortbl,dsfont,centernot,comment, amsthm}
\usepackage{tabularx}
\usepackage{rotating}
\usepackage{indentfirst}
\newtheorem{theorem}{Theorem}
\newtheorem{lemma}{Lemma}
\newtheorem{prop}{Proposition}
\newtheorem{remark}{Remark}[section]
\newcommand{\norm}[1]{\left\lVert#1\right\rVert}

\usepackage{authblk}

\allowdisplaybreaks
\setcounter{section}{0}
\counterwithin{equation}{section}
\counterwithin{theorem}{section}
\counterwithin{lemma}{section}
\counterwithin{prop}{section}
\pagestyle{plain}
\title{A study of general reaction-advection-diffusion equations describing dynamics between target, partaker, and guardian.}
\author[1]{Madi Yerlanov\footnote{Corresponding author}} 
\author[2]{Nancy Rodr{\'\i}guez}
\affil[1]{University of Colorado Boulder, Department of Applied Mathematics,\authorcr 1111 Engineering Center, Boulder, CO, 80309, \authorcr email: \href{mailto:madi.yerlanov@colorado.edu}{madi.yerlanov@colorado.edu}}
\affil[2]{University of Colorado Boulder, Department of Applied Mathematics,\authorcr 1111 Engineering Center, Boulder, CO, 80309, \authorcr email: \href{mailto:rodrign@colorado.edu}{rodrign@colorado.edu}}
\date{\today}
\begin{document}
\maketitle
\begin{abstract}
This paper introduces a reaction–advection–diffusion system that models interactions among three actors: a target, a partaker, and a guardian. The framework is versatile, capturing phenomena ranging from the emergence and movement of crime hotspots in urban areas to shifts in public attitudes during critical events as individuals and control units move through space. We prove local and global existence of solutions under realistic assumptions and showcase the model through two applications: protest dynamics driven by new demonstrators joining and escalating hostility in enclosed environments such as classrooms or offices. Numerical simulations highlight the resulting complex spatial patterns and temporal dynamics. 
\end{abstract}
\textbf{Key Words}: reaction-advection-diffusion equations, sociological modeling, routine activity theory, well-posedness.\\
\textbf{2020 Mathematics Subject Classification}: 35Q91 (Primary), 91C99, 37M05 (Secondary).
\section{Introduction}

Routine activities theory posits that a crime is likely to happen when three elements come together: a suitable target, a motivated offender, and a lack of a guardian \cite{miro2014routine}. This shifts the focus from the criminal as a person to the situation in which the crime occurs, mainly considering factors like availability, proximity, and exposure \cite{enwiki:1299623116}. This idea can also apply to other areas of sociology, and here, we substitute ``offender" with a more neutral term ``partaker". For example, the spread of a rumor or conspiracy in a community depends on the gullibility of the audience (target), the persuasiveness of the person spreading it (partaker), and whether there is anyone to challenge it (guardian). In this way, we can think of the interaction more broadly as any situation in which one person benefits from another's change in exposure. In this work, we model the relationships between targets, partakers, and guardians through reaction–advection–diffusion systems, to understand how their interactions change over time and across space.

Reaction–diffusion systems have long been applied to model processes in biology, ecology, and chemistry, most commonly involving interactions between two agents, such as predator and prey or competing species. They have also generated great interest in the mathematical community as a subject of study in nonlinear dynamics, pattern formation, and bifurcation theories; as a result, extensive analysis tools exist. See examples of both theory and application here \cite{cantrell2004spatial,cosner2008reaction, lam2022introduction}. A natural extension of these models is the inclusion of advection terms, which represent directed movement, such as taxis or cross-diffusion. In the chemotaxis setting, cells move along the gradient of a chemical attractant: \textit{e.g.}, soil bacteria seek the concentration of nutrients \cite{gharasoo2014chemotactic}. Reaction-advection-diffusion (RAD) systems, therefore, allow for more complex and realistic dynamics. At the same time, these systems are more challenging to analyze because the advection term can counteract diffusion-induced smoothness and may lead to blow-up.  

Although RAD systems were first used in biology and physical sciences, they have also been applied to social settings. An important example and the main motivation for this paper is the urban crime model introduced in 2008 by a team at the University of California, Los Angeles \cite{short2008statistical}. The model describes the interactions between households and burglars. In addition to the use of routine activities theory \cite{felson2017routine}, the model is based on repeat victimization, which posits that susceptibility diffuses to neighbors \cite{anselin2000spatial,bowers2005domestic, johnson1997new, short2009measuring}, and the broken windows effect, which suggests that an offense increases attractiveness \cite{braga2015can, sampson2004seeing,WilsonKelling1982}. The system was later extended to include police agents as well \cite{jones2010statistical, pitcher2010adding}. We state the version of this model with law enforcement proposed in \cite{jones2010statistical}
\begin{equation}\label{eq:E}
    \left\{
    \begin{aligned}
        \frac{\partial A}{\partial t}=&D_A\Delta A - A+\alpha+A\rho, &&\text{ in } \Omega\times(0, T], \\
        \frac{\partial \rho}{\partial t}=&\nabla \cdot \left(D_\rho \nabla \rho-2 \rho\nabla \ln A\right)-A\rho+\beta - \rho u, &&\text{ in } \Omega\times(0, T], \\
        \frac{\partial u}{\partial t}=&\nabla \cdot \left(D_u \nabla u-\chi u\nabla \ln A\right), &&\text{ in } \Omega\times(0, T], \\
         \frac{\partial A}{\partial \textbf{n}}=& \frac{\partial \rho}{\partial \textbf{n}}= \frac{\partial u}{\partial \textbf{n}}=0, &&\text{ on }\partial\Omega, \\
         (A,\rho,&\,u)(\textbf{x}, 0)=(A_0, \rho_0,u_0)\geq, \not\equiv 0, && \text{ in }\Omega.
    \end{aligned}
    \right.
\end{equation}

System \eqref{eq:E} is a RAD system with three agents $(A,\rho,u)$, satisfying certain initial and boundary conditions on a bounded domain $\Omega\subset \mathbb{R}^n$ (with piecewise smooth boundary $\partial\Omega$) and a final time $T$, which can be infinite. The scalar fields represent the {\it attractiveness value}, $A$, which indicates the probability that a burglar will commit an offense at a particular location, the {\it density of crime agents}, $\rho$, who move randomly but also tend to seek regions with high attractiveness, and the {\it density of police agents}, $u$, who have similar movement mechanics as the burglars. Repeat victimization is manifested through the $\Delta A$ term, and the broken window effect is modeled by the $+A\rho$ term. The attractiveness value has a growth rate of $-A+\alpha$, whereas crime agent density has only a constant rate of $+\beta$. Law enforcement resources are assumed to be fixed, so the total mass of $u$ is preserved over time. Crime agents leave the region after committing a crime or being removed by police $-\rho(A+u)$. 

The system \eqref{eq:E} provides a framework that can be generalized beyond crime and applied to a wide range of sociological domains, which serves as the central motivation of this paper.

\subsection{A Target-Partaker-Guardian Model}
In this paper, we consider a general RAD framework that describes the target-partaker-guardian relationship.   The general model, which will be the focus of this paper, is given below

\begin{equation}\label{eq:GenModel}
    \left\{
    \begin{aligned}
        \frac{\partial u}{\partial t}&=D_u\Delta u + f_1(u,v,w)+g_1(u,v,w)+h_1(u), &&\text{ in } \Omega\times(0,T],\\
        \frac{\partial v}{\partial t}&=\nabla \cdot \big[D_v \nabla v-v\chi_1(u,v,w)\nabla u \big]+ f_2(u,v,w)+g_2(u,v,w)+h_2(v),&&\text{ in } \Omega\times(0,T],\\
        \frac{\partial w}{\partial t}&=\nabla \cdot \big[D_w \nabla w-w \chi_2(u,v,w) \nabla u \big],&&\text{ in } \Omega\times(0,T],\\
         \frac{\partial u}{\partial \mathbf{n}}&= \frac{\partial v}{\partial \mathbf{n}}= \frac{\partial w}{\partial \mathbf{n}}=0,&&\text{ on }\partial\Omega,\\
         u(\cdot&,0)=u_0\geq0,\,v(\cdot,0)=v_0\geq0,\,w(\cdot,0)=w_0\geq0,&&\text{ in }\Omega\times\{0\}.
    \end{aligned}
    \right.
\end{equation}

In system \eqref{eq:GenModel}, $u$ represents the target, $v$ the partaker, and $w$ the guardian. Note that $T$ and $\Omega \subset \mathbb{R}^n$ are the same as in system \eqref{eq:E}, although the natural choice for $n$ is 2, which is the dimension used in the proofs later; hence, it is fixed throughout the paper. The partaker seeks to increase the target’s exposure or intensity, while the guardian works to reduce the influence of the partakers. Unlike in standard predator-prey models, the number of guardians does not grow through interactions, nor are there natural birth or death rates. As a result, the total number of guardians remains constant.  The homogeneous Neumann boundary conditions are standard for these models, which shows that the interacting population is isolated. We comment on each of the terms and assumptions involved, which the predator-prey models strongly inspire. 

The {\it diffusion terms} $D_v \Delta v$ and $D_w \Delta w$ represent the random movement of partakers and guardians. The term $D_u \Delta u$ can be interpreted as ordinary diffusion or as the spread of attractiveness, susceptibility, or a similar abstract quantity. In original crime modeling, this diffusion captures repeat and near-repeat victimization: The risk of being targeted extends to nearby connections; for instance, if one household is highly attractive to perpetrators, its neighbors also become somewhat attractive. In this paper, we assume that the diffusion rates are positive constants; however, the model can be generalized to consider a heterogeneous spread. 

The {\it advection terms} $\chi_1$ and $\chi_2$ represent the rates at which nontarget agents move in response to the target gradient. They describe how sensitive partakers and guardians are to the presence of targets.  We drop the dependence on $v$ and $w$ in the definition. The {\it growth and decay terms} $h_1$ and $h_2$ represent the natural growth or death of targets and partakers. The guardians, $ w$, have no growth or decay terms, only movement, reflecting the assumption that their total number is capped. 

The {\it victimization terms} $f_1$ and $f_2$ describe how interactions between targets and partakers change their states. In the crime setting, $f_1$ is positive (the target becomes more susceptible after being victimized), while $\text{sign}(f_2) = -\text{sign}(f_1)$ (for example, partakers may be removed from the system after committing their acts). This mirrors the infection mechanism in SIR models, where susceptible individuals become infected. We also assume that the reaction is scaled to show how a partaker's action relatively affects the ``attractiveness" value by a constant, $\gamma$. Thus, we redefined functions in the following way: $f_1(u,v,w)=\gamma f(u,v,w)$ and $f_2(u,v,w)=-f(u,v,w)$ for some non-negative function $f$.

The functions $g_1$ and $g_2$, the {\it control terms}, capture the effect of the guardians on the system. Typically, $g_2 \leq 0$ since partakers are removed when they encounter guardians. The function $g_1$ counteracts victimization, often having the opposite sign of $f_1$ (or $f$ in our case). In some settings, $g_1$ may be ignored, for example, when police cannot directly reduce a household’s attractiveness after a crime.  Finally, the reaction terms vanish when their corresponding population is zero, e.g., $g_1(0,v,w)=0$. This assumption directly shows the involvement of scalar fields in the corresponding reactions. If the scalar field is zero, then no interaction is possible. To enforce this, each function is multiplied by its corresponding variable. For example, as $f$ appears in both $u$ and $v$ equations, we have $f(u,v,w) \mapsto uvf(u,v,w)$. We also extend this assumption to $h_2$ to emphasize the presence of the logistic growth, \textit{i.e.}, $h_2(v)\mapsto vh_2(v)$.

With these assumptions in mind, we consider the following model
\begin{equation}\label{eq:themainsys}
    \left\{
    \begin{aligned}
        \frac{\partial u}{\partial t}&=D_u\Delta u +u \big[\gamma vf(u,v,w)- g_1(u,v,w)\big]+h_1(u), &&\text{ in } \Omega\times(0,T],\\
        \frac{\partial v}{\partial t}&=\nabla \cdot \big[D_v \nabla v-v\chi_1(u)\nabla u \big]\\
        &+ v\big[-uf(u,v,w)-g_2(u,v,w)+h_2(v)\big],&&\text{ in } \Omega\times(0,T],\\
        \frac{\partial w}{\partial t}&=\nabla \cdot \big[D_w \nabla w-w \chi_2(u) \nabla u \big],&&\text{ in } \Omega\times(0,T],\\
                 \frac{\partial u}{\partial \mathbf{n}}&= \frac{\partial v}{\partial \mathbf{n}}= \frac{\partial w}{\partial \mathbf{n}}=0,&&\text{ on }\partial\Omega,\\
         u(\cdot&,0)=u_0\geq0,\,v(\cdot,0)=v_0\geq0,\,w(\cdot,0)=w_0\geq0,&&\text{ in }\Omega\times\{0\}.
    \end{aligned}
    \right.
\end{equation}
  
 {\it Outline:} This paper is divided into two main parts: theory and application. In section \ref{Sec:MainRes}, we prove the local and global existence of system \eqref{eq:themainsys} under certain assumptions. The proof follows the steps developed for three-component predator-prey systems with taxis \cite{qiu2023dynamics, wang2015stationary, wu2018dynamics} and relies on the general RAD theory \cite{tao2015boundedness, winkler2010absence, winkler2010aggregation}. The main difference lies in the inclusion of a guardian, whose interactions differ from those of a typical predator, as described above. Then, in section \ref{Sec:linst}, we provide a basic linear stability analysis showing conditions under which both $u$ reaches the baseline constant value and $v$ effectively becomes extinct. In section \ref{Sec:App}, we demonstrate the generality of system \eqref{eq:themainsys} for possible applications. We provide two distinct examples that describe critical social situations at the local population level. The scenarios include the spread of protest propensity and the growth of school bullying.

\section{Local and Global Existence}\label{Sec:MainRes}
In this section, we state and prove the local and global existence of system \eqref{eq:themainsys} under general assumptions. The general idea is first to demonstrate the local existence and then to find a uniform bound that does not depend on time. We focus on two-dimensional domains, which are the physically relevant geometries. The additional (and perhaps overlapping with model assumptions) mathematical hypotheses required for the proof are as follows.

\begin{enumerate}
    \item[(H1)] All parameters ($D_u,\,D_v,\,D_w$, and $\gamma$) and following bounds ($A_1,\,B_1,\,A_2,\,B_2,\,E_1,\,E_2$, and $F$) are positive.
    \item[(H2)]  \textit{The advection terms grow at most linearly in $v$ and $w$, respectively}, \textit{i.e.}, $\chi_i$ is bounded,  $\chi_i(s)\leq E_i$ for $i=1,\,2.$
    \item[(H3)] The \textit{intrinsic growth terms are saturated}, \textit{i.e.}, the function $h_i$ is bounded, $h_i(s)\leq A_i  - B_i s$ for $i=1,\,2$. 
    \item[(H4)] $g_i$ is continuously differentiable on $[0,\infty)^3$ for $i=1,\,2$; $f$ is continuously differentiable on $[0,\infty)^3$, and $h_i$ and $\chi_i$ are continuously differentiable on $[0,\,\infty)$ for $i=1,2$. 
    \item[(H5)] \textit{The guardianship/victimization effects are monotone}, \textit{i.e.}, $g_i$ for $i=1,2$, and $f$ are non-negative. 
    \item[(H6)] The \textit{victimization effect is saturated and/or controlled by a guardian:} $\gamma vf(u,v,w)\leq F+g_1(u,\,v,\,w)$ for all $u,\,v,\,w\geq 0$. 
\end{enumerate}

\begin{remark}
Regarding (H6), one may ask how a guardian can control victimization. For example, let $f(u,v,w) = 1$, giving the reaction term that appears in the $A,\rho$ equations in system \eqref{eq:E}, \textit{i.e.}, $\pm uv$, and
$$
g_1(u,v,w) = \frac{J v (1+w)}{1 + e^{-K(v-L)}},
$$
with $K \gg 1$, $J > \gamma$, and $L > 0$. If the number of partakers becomes too large, guardians immediately reduce attractiveness in proportion to all relevant fields. This links $g_1$ and $g_2$ in \eqref{eq:GenModel} so that removing partakers and reducing attractiveness occur simultaneously.
\end{remark}

With these assumptions in place, we are now ready to state our first main result.

\begin{theorem}[Local existence]\label{thm:locex}
    Assume that hypotheses (H1)-(H6) hold and the initial data satisfies $u_0,\,v_0,\,w_0\in [W^{1,p}(\Omega)]^3$ for some $p>n$, where $n=2$ is the dimension of the domain. 
    \begin{enumerate}
        \item There exists a $T_{\max}>0$ such that system \eqref{eq:themainsys} has a unique non-negative classical solutions $(u,v,w)\in[C(\overline{\Omega}\times[0,\,T_{\max}))\cap C^{2,1}(\overline{\Omega}\times(0,T_{\max}))]^3$.
    \item The total mass is bounded:
    \begin{equation*}
        \int_\Omega u(x,t)dx\leq C_1,\quad \int_\Omega v(x,t)dx\leq C_2,\quad \int_\Omega w(x,t)dx= C_3,\quad t\in(0,T_{\max}),
    \end{equation*}
    where $$C_1=\max\left\{\int_\Omega u_0+\gamma v_0dx,\,\frac{2\gamma B_1C_2+|\Omega|(A_1+A_2)}{B_1}\right\},C_2=\max\left\{\int_\Omega v_0dx,\,\frac{A_2|\Omega|}{B_2}\right\},$$
    and 
    $$C_3=\int_\Omega w_0(x)dx.$$
    \item There exists a positive constant $K$ such that
    \begin{equation*}
        0\leq u(x,t)< K,\quad v\geq0,\quad w\geq0,\quad x\in\overline{\Omega},\quad t\in[0,T_{\max}).
    \end{equation*}
    \item     
    If $T_{\max}<\infty$, then 
    \begin{equation*}
        \norm{u(\cdot,t)}_{W^{1,\infty}(\Omega)}+\norm{v(\cdot,t)}_{L^\infty(\Omega)}+\norm{w(\cdot,t)}_{L^\infty(\Omega)}\to \infty\quad\text{ as } t\nearrow T_{\max}.
    \end{equation*}
    \end{enumerate}

\end{theorem}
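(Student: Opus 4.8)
The plan is to follow the standard program for quasilinear taxis systems: obtain a unique maximal classical solution from Amann's theory, read off non-negativity and the a priori bounds (2)--(3) from the algebraic structure of the reaction and drift terms, and then turn Amann's abstract continuation criterion into the explicit blow-up alternative (4). First I would recast \eqref{eq:themainsys} as $\mathbf z_t=\nabla\cdot(\mathcal A(\mathbf z)\nabla\mathbf z)+\Phi(\mathbf z)$ for $\mathbf z=(u,v,w)^\top$, where
\[
\mathcal A(\mathbf z)=\begin{pmatrix} D_u & 0 & 0\\ -v\chi_1(u) & D_v & 0\\ -w\chi_2(u) & 0 & D_w\end{pmatrix}.
\]
This matrix is lower triangular with diagonal $D_u,D_v,D_w>0$ by (H1), so its spectrum $\{D_u,D_v,D_w\}$ lies in $(0,\infty)$ and $\mathcal A$ is normally elliptic, while (H4) makes $\mathcal A$ and $\Phi$ locally Lipschitz in $\mathbf z$. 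Since $p>n=2$ gives $W^{1,p}(\Omega)\hookrightarrow C(\overline\Omega)$, Amann's theorem for quasilinear parabolic systems yields a unique maximal classical solution on $[0,T_{\max})$ of the claimed regularity. Non-negativity follows from the quasi-positive structure: each reaction term is non-negative whenever its own component vanishes and the others are non-negative (the $v$- and $w$-reactions/drifts carry an explicit factor of $v$, resp.\ $w$, while for $u$ one uses $h_1(0)\ge0$, which holds in the model), so $0$ is a subsolution of each scalar equation and the parabolic maximum principle — applied to the $v$- and $w$-equations with $u$ frozen as a coefficient — gives $u,v,w\ge0$.

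For (2) I would integrate each equation over $\Omega$ and annihilate every divergence term with the Neumann conditions. The $w$-equation has no reaction, so $\frac{d}{dt}\int_\Omega w=0$ and $\int_\Omega w\equiv\int_\Omega w_0=C_3$. For $v$, discarding the non-positive terms $-uvf$ and $-vg_2$ (by (H5) and non-negativity) and using $h_2(v)\le A_2-B_2v$ from (H3) gives $\frac{d}{dt}\int_\Omega v\le A_2\int_\Omega v-\frac{B_2}{|\Omega|}\big(\int_\Omega v\big)^2$ after a Cauchy--Schwarz/Jensen step, a logistic inequality bounding $\int_\Omega v$ by $C_2$. The $u$-bound is the one step needing a special pairing: differentiating $\int_\Omega(u+\gamma v)$ makes the victimization terms $\pm\gamma uvf$ cancel exactly, leaving $\frac{d}{dt}\int_\Omega(u+\gamma v)=\int_\Omega[-ug_1-\gamma vg_2+h_1(u)+\gamma vh_2(v)]$; dropping the non-positive control terms and inserting (H3) together with the bound on $\int_\Omega v$ yields a linear/logistic differential inequality whose comparison solution is $C_1$.

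For (3) the hypothesis (H6) is what renders the $u$-equation dissipative. Since $u\ge0$, (H6) gives $u[\gamma vf-g_1]=u\gamma vf-ug_1\le uF$ pointwise, so with (H3) the full $u$-reaction is dominated by $A_1-(B_1-F)u$; thus $u$ is a subsolution of $y_t=D_u\Delta y+A_1-(B_1-F)y$ under Neumann conditions, and comparison with the spatially homogeneous ODE gives $u\le\max\{\norm{u_0}_{L^\infty(\Omega)},\,A_1/(B_1-F)\}=:K$, a time-uniform bound in the saturation regime $B_1>F$ (and a finite bound on each compact subinterval otherwise); with $u\ge0$ this is $0\le u<K$. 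For (4) I would invoke Amann's continuation criterion: if $T_{\max}<\infty$ then $\norm{(u,v,w)(\cdot,t)}_{W^{1,p}}$ cannot stay bounded as $t\nearrow T_{\max}$. It then suffices to show that the weaker quantity $\norm{u}_{W^{1,\infty}}+\norm{v}_{L^\infty}+\norm{w}_{L^\infty}$ already controls the $W^{1,p}$-norm: granting $\nabla u\in L^\infty$ and $v,w\in L^\infty$, the $v$- and $w$-equations are linear advection--diffusion equations with bounded drift $\chi_i(u)\nabla u$ (using (H2)), so Neumann-heat-semigroup smoothing (variation-of-constants estimates as in the cited RAD theory) upgrades $v,w$ to $W^{1,p}$ while parabolic $L^p$/Schauder estimates upgrade $u$; boundedness of the $W^{1,\infty}+L^\infty$ quantity would therefore force extendability, and the contrapositive is exactly the stated alternative.

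I expect the real work to sit in (4): propagating a bound on $\nabla u$ — rather than on $u$ alone — through the taxis terms to close the regularity estimates, since it is precisely $\nabla u$ that drives the advection of $v$ and $w$ and hence governs their regularity. By contrast the algebraic steps (2)--(3) are essentially forced by the design of (H3), (H5) and especially (H6), the last being the structural assumption engineered to make the target equation dissipative despite its coupling to $v$ and $w$.
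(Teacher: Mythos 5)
Your treatment of parts \textit{1}, \textit{2}, and non-negativity matches the paper's: Amann's theory applied to the lower-triangular quasilinear form, the maximum principle with $0$ as a subsolution, and the integrated logistic inequality for $\int_\Omega v$ followed by the pairing $\int_\Omega (u+\gamma v)$ that cancels the $\pm\gamma uvf$ terms. The genuine gap is in part \textit{3}. You bound the $u$-reaction pointwise by $A_1-(B_1-F)u$ and compare with the ODE $y'=A_1-(B_1-F)y$, which produces a single constant $K$ valid on all of $[0,T_{\max})$ only in the regime $B_1>F$. But $B_1>F$ is nowhere among (H1)--(H6): nothing prevents $F\geq B_1$, in which case your comparison solution grows exponentially, and since $T_{\max}$ may be infinite (Theorem \ref{thm:globex} shows it is), ``a finite bound on each compact subinterval'' yields no admissible $K$ at all. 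This uniformity is not cosmetic: Lemma \ref{lem:W1infonu}, and hence the whole global existence argument, uses precisely that $\sup_{t\in[0,T_{\max})}\norm{u}_{L^\infty(\Omega)}$ is bounded independently of $T_{\max}$. The paper closes this step by a different mechanism: using (H6) and discarding $h_1(u)$ where $u>A_1/B_1$ (there (H3) forces $h_1<0$), one gets $u_t\leq D_u\Delta u+Fu$, and then Alikakos' Theorem 3.1 \cite{alikakos1979lp} (a Moser-type $L^1\to L^\infty$ iteration) converts the \emph{time-uniform $L^1$ bound} $C_1$ from part \textit{2} into a time-uniform $L^\infty$ bound, with no restriction on the size of $F$ relative to $B_1$. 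That is the missing idea: the uniformity comes from the mass bound plus iteration, not from a sign condition on the zeroth-order coefficient.

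A secondary point concerns part \textit{4}: the paper obtains the stated alternative directly from the version of Amann's theorem rephrased in \cite{wu2016global}, so no bootstrap is needed. Your proposed re-derivation (controlling $W^{1,p}$ norms by $\norm{u}_{W^{1,\infty}}+\norm{v}_{L^\infty}+\norm{w}_{L^\infty}$ via variation of constants) runs into a real obstruction as sketched: to reach $\norm{v}_{W^{1,p}}$ through Lemma \ref{thm:semigroup} one needs $\theta>\tfrac12$, and the taxis term then costs $(t-s)^{-\theta-\frac12-\varepsilon}$ with exponent exceeding $1$, a non-integrable singularity. Closing that estimate requires interpolating through weaker norms or H\"older regularity of the drift; the citation spares you this work.
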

\begin{proof}(\textbf{Theorem \ref{thm:locex}}) First note that the $W^{1,p}$ assumption on the initial data guarantees that $u_0$, $v_0$, and $w_0$ are H\"older continuous by the Sobolev embedding theorem, hence we have regularity in the classical sense. Let $\vec{z}:=(u,v,w)$, then system \eqref{eq:themainsys} can be rewritten as follows

\begin{equation*}
    \left\{
    \begin{aligned}
        \frac{\partial \vec{z}}{\partial t}&=\nabla \cdot(Q\nabla\vec{z})+R(\vec{z})&&\text{ in } \Omega\times(0,T],\\
        \frac{\partial \vec{z}}{\partial \mathbf{n}}&=0&&\text{ on } \partial\Omega,\\
        \vec{z}(\cdot,&\, t)=(u_0\geq0,v_0\geq0,w_0\geq0)&&\text{ in } \Omega\times\{0\},
    \end{aligned}
    \right.
\end{equation*}
where
\begin{equation*}
    Q=\begin{bmatrix}
        D_u&0&0\\[0.5em]
        -v\chi_1(u)&D_v&0\\[0.5em]
        -w\chi_2(u)&0&D_w
    \end{bmatrix},
    \quad
    R=\begin{bmatrix}
     \gamma uvf(u,v,w)-ug_1(u,v,w)+h_1(u)\\[0.5em]
     - uvf(u,v,w)-vg_2(u,v,w)+vh_2(v)\\[0.5em]
     0
    \end{bmatrix}.
\end{equation*}
Note that $Q$ is lower triangular with positive diagonal entries by (H1), hence the system is normal parabolic. Then, parts \textit{1} and \textit{4} of Theorem \ref{thm:locex} follow from Amman's Theorem \cite{amann_dynamic_1989}, \cite{amann_dynamic_1990}, \cite{amann_nonhomogeneous_1993}, rephrased in \cite{wu2016global}. 

(H1) and (H4) allow us to apply the maximum principle. We obtain that $0$ is a lower solution. Then, the non-negativity of the solution follows from the non-negativity of the initial conditions. This proves part \textit{3}.

To prove part {\it 2}, note that $w$ has a constant mass by an application of the divergence theorem. Now, let $U=U(t):=\int_\Omega u(x,t)dx$ and $V=V(t):=\int_\Omega v(x,t)dx$. Using boundary conditions and (H3), we have

\begin{equation*}
    \begin{aligned}
        \frac{dV}{d t}&\leq \int_\Omega vh_2(v)dx\leq A_2\int_\Omega v dx-B_2\int_\Omega v^2dx\\
        &\leq A_2\int_\Omega vdx -\frac{B_2}{|\Omega|}\bigg(\int_\Omega vdx\bigg)^2\leq A_2 V-\frac{B_2}{|\Omega|}V^2.
    \end{aligned}
\end{equation*}

Note that the (H5) assumption allows us to remove the negative reaction terms. Cauchy-Schwartz allows us to bound $L^2(\Omega)$ with a $L^1(\Omega)$ estimate. After the solving the differential inequality we get that $V\leq C_2:=\max\left\{\int_\Omega v_0dx,\frac{A_2|\Omega|}{B_2}\right\}$.  

To get the inequality for $U$, we go through the same steps as before, but now we focus on $(U+\gamma V)$.  We obtain that
\begin{equation*}
    \begin{split}
        (U+\gamma V)_t&\leq\int_\Omega h_1(u)dx+\gamma \int_\Omega vh_2(v)dx\leq A_1|\Omega|-B_1 \int_\Omega u dx +  \gamma A_2\int_\Omega v dx-\gamma B_2\int_\Omega v^2dx\\
        &\leq A_1 |\Omega|-B_1U+\gamma A_2V-\gamma \frac{B_2}{|\Omega|}V^2\\
        &=-B_1(U+\gamma V) +\Big(A_1|\Omega|+\gamma V(A_2+B_1)\Big)\\
        &=-B_1(U+\gamma V) +\Big(A_1|\Omega|+\gamma C_2(A_2+B_1)\Big).
    \end{split}
\end{equation*}
We solve the differential equation and obtain that 
$$U\leq(U+\gamma V)\leq C_1:=\max\left\{\int_\Omega u_0+\gamma v_0dx,\frac{A_1|\Omega|+\gamma C_2(A_2+B_1)}{B_1}\right\}.$$
This proves part \textit{2} of Theorem \ref{thm:locex}.  Finally, we treat the first equation as a scalar with respect to $u$. Note that by (H3), if $u>A_1/B_1$, then $h_1<0$, and we have
\begin{equation*}
    \begin{aligned}
         \frac{\partial u}{\partial t}&=D_u\Delta u+u\Big(\gamma v f(u,v,w)-g_1(u,v,w)\Big)+h_1(u)\\
      &  \leq \Delta u+ Fu,
    \end{aligned}
\end{equation*}
where $F$ is a bound from (H6). This, in combination with Theorem 3.1 in Alikakos \cite{alikakos1979lp}, implies that there exists $C$ such that 
\begin{equation*}
    \sup_{t\in[0,T_{\max})}\norm{u}_\infty< C\max\left\{1,\frac{A_1}{B_1},\sup_{t\in[0,T_{\max})}\norm{u}_1,\norm{u_0}_\infty\right\}\leq K:= C\max\left\{1,\frac{A_1}{B_1},C_1,\norm{u_0}_\infty\right\}.
\end{equation*}
 Note that $C_1$ does not depend on $t$, hence the bound is uniform. This concludes part \textit{3} of Theorem \ref{thm:locex}.
\end{proof}

\begin{remark}
    In proving the $L_1$-bounds of Theorem \ref{thm:locex}, if one has different bounds on the growth rate, for example, quadratic on $u$, then one needs to complete a square (by adding and subtracting terms) to obtain an expression of the form
    \begin{equation*}
        (U+\gamma V)_t\leq -K_1(U+\gamma V)^2+K_2(U+\gamma V)
    \end{equation*}
    for some constant $K_1$ and $K_2$ independent of $U$ and $V$. This can be extended to higher powers; however, the expressions for constants become increasingly algebraically complex.
\end{remark}

The local existence, in combination with some a priori estimates, will give our second main result. 
\begin{theorem}[Global existence]\label{thm:globex}
    Assume that the hypotheses of Theorem \ref{thm:locex} hold.
 Then, system \eqref{eq:themainsys} has a unique global non-negative classical solutions $(u,v,w)\in[C(\overline{\Omega}\times[0,\infty))\cap C^{2,1}(\overline{\Omega}\times(0,\infty))]^3$.
\end{theorem}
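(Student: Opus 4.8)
The plan is to combine the local theory of Theorem \ref{thm:locex} with uniform-in-time a priori estimates and the extensibility criterion. By part \textit{1} of Theorem \ref{thm:locex} there is a unique nonnegative classical solution on a maximal interval $[0,T_{\max})$, and by part \textit{4} either $T_{\max}=\infty$ or $\norm{u(\cdot,t)}_{W^{1,\infty}(\Omega)}+\norm{v(\cdot,t)}_{L^\infty(\Omega)}+\norm{w(\cdot,t)}_{L^\infty(\Omega)}\to\infty$ as $t\nearrow T_{\max}$. Hence it suffices to show, under the standing assumption $T_{\max}<\infty$, that each of these three norms stays bounded by a constant independent of $t$; the resulting contradiction forces $T_{\max}=\infty$. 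Part \textit{3} already supplies $0\le u<K$ uniformly, and part \textit{2} supplies the $L^1$ bounds on $u,v,w$ together with conservation of the mass of $w$, so the whole task reduces to upgrading the $L^1$ control of $v$ and $w$ to $L^\infty$ and the $L^\infty$ control of $u$ to $W^{1,\infty}$.

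The structural feature that makes this possible is the logistic damping hidden in the $v$-equation. Since $f,g_2\ge0$ by (H5) and $h_2(s)\le A_2-B_2 s$ by (H3), the reaction in the second equation of \eqref{eq:themainsys} obeys $v(-uf-g_2+h_2)\le v(A_2-B_2v)$, producing an absorbing term of the form $-B_2 v^2$. Testing the $v$-equation against $v^{p-1}$ and integrating by parts, the taxis contribution from $\nabla\cdot(v\chi_1(u)\nabla u)$ is estimated with (H2) ($\chi_1\le E_1$) and Young's inequality: the factor $v^{(p-2)/2}|\nabla v|$ is absorbed into the diffusion term $-(p-1)D_v\int v^{p-2}|\nabla v|^2$, leaving a term controlled by $\int v^{p}|\nabla u|^{2}$, which in turn is split by Young into a small multiple of $\int v^{p+1}$ (absorbed by the logistic term) plus a multiple of $\int|\nabla u|^{2(p+1)}$. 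This yields a differential inequality $\frac{d}{dt}\int v^{p}\le -c\int v^{p}+C\big(1+\int|\nabla u|^{2(p+1)}\big)$, so an $L^p$ bound on $v$ follows once $\nabla u$ is controlled in $L^{2(p+1)}$. The same testing scheme applies to the $w$-equation, which carries no reaction; there the absence of damping is compensated by mass conservation and, ultimately, by an $L^\infty$ bound on $\nabla u$.

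To close the loop I would estimate $\nabla u$ through the $u$-equation. Because $u$ is already bounded, the reaction $R_1=\gamma uvf-ug_1+h_1$ is bounded above by $Fu+h_1\le FK+A_1$ via (H6), and its full size is controlled in $L^q$ once $v$ and $w$ are controlled in a sufficiently high $L^s$, using that $f$ and $g_1$ are $C^1$ (H4) and that $u$ ranges over the compact interval $[0,K]$. Feeding $R_1\in L^q$ into the variation-of-constants formula for $u$ and the smoothing estimates for the Neumann heat semigroup $e^{tD_u\Delta}$ then bounds $\nabla u$ in $L^r$. Steps two and three are therefore coupled, and I would run them as a bootstrap: starting from the $L^1$ bounds of Theorem \ref{thm:locex}, alternately raise the integrability of $(v,w)$ and of $\nabla u$. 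In the physical dimension $n=2$ the Gagliardo--Nirenberg exponents stay on the good side of criticality at each stage, so the iteration terminates, giving $\nabla u\in L^\infty$ and then, by Moser iteration (or $L^p$--$L^\infty$ semigroup estimates) applied to the $v$- and $w$-equations, $v,w\in L^\infty$. Finally, with $v,w\in L^\infty$ the reaction $R_1$ is in $L^\infty$, and parabolic Schauder/semigroup regularity upgrades $u$ to $C^{1+\alpha}(\overline\Omega)$, hence $\norm{u}_{W^{1,\infty}(\Omega)}$ is bounded uniformly in $t$.

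The main obstacle is precisely this coupling. The taxis term forbids treating the $v$- (and $w$-) equations in isolation, while the reaction in the $u$-equation mixes in $v$ and $w$, so no single estimate is self-contained; the argument succeeds only because the saturated growth hypothesis (H3) furnishes the quadratic damping needed to absorb the taxis-generated term $\int v^p|\nabla u|^2$, and because $n=2$ keeps every interpolation subcritical. The role of (H6) is equally delicate: without the bound $\gamma vf\le F+g_1$ the positive reaction in the $u$-equation could not be dominated, and both the $L^\infty$ bound on $u$ and the subsequent gradient estimate would fail. Once all three uniform bounds are in hand, the extensibility criterion of Theorem \ref{thm:locex} is contradicted, and global existence and uniqueness follow.
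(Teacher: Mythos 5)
Your overall skeleton (the extensibility criterion from part \textit{4} of Theorem \ref{thm:locex} plus uniform-in-time bounds) matches the paper, and your treatment of $v$ and $w$, \emph{granted} an $L^\infty$ bound on $\nabla u$, is workable. But the central step --- how you obtain control of $\nabla u$ --- has a genuine gap, and it is exactly where the paper's proof takes a different and decisive turn. You propose to bound the reaction $R_1=\gamma uvf-ug_1+h_1$ in $L^q$ ``once $v$ and $w$ are controlled in a sufficiently high $L^s$, using that $f$ and $g_1$ are $C^1$.'' Under (H4) these functions are merely $C^1$ on $[0,\infty)^3$ with no growth restriction in $v,w$; they may grow exponentially, so $L^s$ bounds on $v,w$ give no $L^q$ bound on $f(u,v,w)$ or $g_1(u,v,w)$ at all, and the one-sided bound from (H6) cannot substitute, since $L^q$ control of $R_1$ requires two-sided bounds while $-ug_1$ is unbounded below. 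Worse, even granting linear growth of the nonlinearities, your bootstrap cannot be initiated: the only data available at the start are the $L^1$ bounds of part \textit{2}, so $R_1\in L^1$ at best; in $n=2$ the gradient semigroup estimate from $L^1$ carries the singular factor $t^{-\frac12-(1-\frac1r)}$, which is integrable only for $r<2$, so the first pass yields $\nabla u\in L^r$ with $r<2$ only. Your own $v$-estimate, however, demands $\nabla u\in L^{2(p+1)}$, i.e.\ at least $L^4$, before it produces any gain over $L^1$. The loop never closes.

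The paper avoids the bootstrap altogether by exploiting a decoupling you did not use: writing the $u$-equation as $u_t=D_u\Delta u-u+\phi+h_1$ with $\phi:=\gamma uvf-ug_1+u$, hypotheses (H5)--(H6) together with $\phi(0,v,w)=0$ and Lipschitz continuity in $u$ give $\norm{\phi}_{L^p}\leq c\norm{u}_{L^p}$, i.e.\ the reaction is controlled by $u$ \emph{alone} --- and $u$ is already uniformly bounded by part \textit{3} of Theorem \ref{thm:locex} (via Alikakos). Hence one application of the variation-of-constants formula and the semigroup estimates (Lemma \ref{thm:semigroup}) bounds $\norm{u(\cdot,t)}_{W^{1,\infty}(\Omega)}$ \emph{first}, with no information on $v$ or $w$ beyond their mere existence (Lemma \ref{lem:W1infonu}). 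Everything else then cascades without circularity: testing the $v$- and $w$-equations with $v$ and $v^3$, combined with Gagliardo--Nirenberg and the $L^1$ mass bounds --- not the logistic damping, which is why the identical proof covers the reaction-free $w$-equation --- gives the $L^2$ and $L^4$ bounds (Lemmas \ref{lem:L2onv}--\ref{lem:L4onw}), and a second semigroup application upgrades these to $L^\infty$ (Lemmas \ref{lem:Linfonv}, \ref{lem:Linfonw}). To repair your argument you would either have to add growth hypotheses on $f,g_1$ that the theorem does not assume, or recognize the (H6)-induced decoupling and bound $u$ in $W^{1,\infty}$ before touching $v$ and $w$.
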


To prove Theorem \ref{thm:globex}, we use part \textit{4} of Theorem \ref{thm:locex}, which indicates that it suffices to establish uniform bounds independent of $T_{\max}$. We begin with $u$, since it does not contain an advection term. Moreover, because $\nabla u$ appears in the other equations, we must control not only $u$ itself but also its gradient. For this reason, we work with bounds in the $W^{1,\infty}$-norm.

\begin{lemma}\label{lem:W1infonu}
     Assume the hypotheses and constants of Theorem \ref{thm:locex}.  Then, for any
$\tau\in(0,\min\{1,T_{\max}\})$, there exists a constant $M_1(\tau)$ such that
      \begin{equation*}
          \norm{u(\cdot,t)}_{W^{1,\infty}(\Omega)}\leq M_1(\tau)\quad \forall t\in(\tau,T_{\max}).
      \end{equation*}
\end{lemma}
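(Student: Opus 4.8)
The plan is to treat the first equation of \eqref{eq:themainsys} as a scalar Neumann heat equation with a reaction source and to exploit the smoothing of the associated semigroup. Writing $\mathcal{R}(x,t):=u\big[\gamma v f(u,v,w)-g_1(u,v,w)\big]+h_1(u)$ for the reaction and letting $(e^{tD_u\Delta})_{t\ge 0}$ denote the Neumann heat semigroup on $\Omega$, the variation-of-constants formula gives
\begin{equation*}
u(\cdot,t)=e^{tD_u\Delta}u_0+\int_0^t e^{(t-s)D_u\Delta}\mathcal{R}(\cdot,s)\,ds .
\end{equation*}
Differentiating and applying the standard $L^q$--$L^\infty$ gradient smoothing estimates for this semigroup (see \cite{winkler2010aggregation}), which yield a constant $\lambda_1>0$ and a bound of the form $\norm{\nabla e^{tD_u\Delta}\phi}_{L^\infty}\le c\,(1+t^{-1/2-n/(2q)})e^{-\lambda_1 t}\norm{\phi}_{L^q}$, I would split $\norm{\nabla u(\cdot,t)}_{L^\infty}$ into the contribution of the initial data and that of the Duhamel integral.

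For the first contribution, since $u_0\in W^{1,p}(\Omega)$ with $p>n=2$, the estimate $\norm{\nabla e^{tD_u\Delta}u_0}_{L^\infty}\le c(1+t^{-\theta})\norm{u_0}_{W^{1,p}}$ holds for some $\theta\in(0,1)$, which is finite and bounded on $t\ge\tau$; this is precisely where the restriction $t>\tau$ and the $\tau$-dependence of $M_1(\tau)$ enter, because the limited regularity of $u_0$ prevents a bound up to $t=0$. For the Duhamel term, the singular kernel $(t-s)^{-1/2-n/(2q)}$ is integrable in $s$ provided $q>n=2$, and the factor $e^{-\lambda_1(t-s)}$ makes the resulting bound uniform in $t\in(\tau,T_{\max})$; everything then reduces to a uniform-in-time estimate $\sup_{s}\norm{\mathcal{R}(\cdot,s)}_{L^q}$ for some fixed $q>2$.

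Establishing this $L^q$ control of the reaction is the heart of the matter, and I expect it to be the main obstacle. Part \textit{3} of Theorem \ref{thm:locex} already provides $0\le u\le K$, and (H3) bounds $h_1$ on $[0,K]$, so $h_1(u)$ is uniformly controlled; moreover (H6) furnishes the one-sided bound $\gamma v f-g_1\le F$, whence $u(\gamma vf-g_1)\le KF$ and the positive part of $\mathcal{R}$ lies in $L^\infty$. The difficulty is the lower bound, that is, controlling the guardianship sink $u\,g_1$ (equivalently the victimization term $u\gamma v f$): here (H6) only gives a lower bound on $g_1$, and the mass estimates of Theorem \ref{thm:locex} control $v$, $w$, and hence $\mathcal{R}$, only in $L^1(\Omega)$, which is too weak for the kernel above. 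A natural route is first to derive a uniform space-time $L^1$ bound on $\mathcal{R}$ from the mass estimates, by integrating $\tfrac{d}{dt}\int_\Omega u=\int_\Omega\mathcal{R}$ and combining $\mathcal{R}\le KF+\sup_{[0,K]}h_1$ with $0\le\int_\Omega u\le C_1$, and then to bootstrap this to an $L^q$ bound, $q>2$, via maximal Sobolev regularity for the scalar $u$-equation together with the Gagliardo--Nirenberg inequality in the two-dimensional setting, closing a self-improving estimate for $\norm{\nabla u(\cdot,t)}_{L^\infty}$. Once $\sup_s\norm{\mathcal{R}(\cdot,s)}_{L^q}<\infty$ is secured, the semigroup splitting above delivers the claimed bound $M_1(\tau)$.
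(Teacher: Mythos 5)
Your overall skeleton is the same as the paper's: variation of constants for the $u$-equation, semigroup smoothing (the paper uses the fractional-power estimates of Lemma \ref{thm:semigroup} rather than your $L^q$--$L^\infty$ gradient estimates, but that difference is cosmetic), the $\tau$-dependence entering only through the initial layer $t^{-\theta}$, and Gamma-function integrability of the singular kernel. The genuine gap is exactly the step you yourself call the heart of the matter: you never prove $\sup_s \norm{\mathcal{R}(\cdot,s)}_{L^q(\Omega)} < \infty$ for some $q>2$, and the repair you sketch does not work. Maximal Sobolev regularity cannot be launched from a space-time $L^1$ bound on the source (there is no maximal regularity theory in $L^1$), and any attempt to first upgrade $v$ or $w$ from $L^1$ to $L^q$ is circular at this point in the paper: those upgrades are Lemmas \ref{lem:L2onv}--\ref{lem:Linfonw}, all of which presuppose the conclusion of the present lemma, since it is precisely the $W^{1,\infty}$ control of $u$ that tames the advection terms in the $v$- and $w$-equations. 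As written, your proposal therefore reduces the lemma to an unproven estimate plus an unworkable bootstrap.

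What the paper does instead, and what your decomposition misses, is to use the structural fact that every reaction term in the $u$-equation carries a factor of $u$. Writing $u_t = D_u\Delta u - u + \phi(u,v,w) + h_1(u)$ with $\phi(u,v,w) := \gamma uvf(u,v,w) - ug_1(u,v,w) + u$, one has $\phi(0,v,w)=0$, and the paper invokes the Lipschitz continuity coming from (H4) to get the pointwise bound $|\phi| \le c\,u$, whence $\norm{\phi(\cdot,s)}_{L^p(\Omega)} \le c\norm{u(\cdot,s)}_{L^p(\Omega)} \le c\,|\Omega|^{1/p}K$ by part \textit{3} of Theorem \ref{thm:locex}; no integrability of $v$ or $w$ beyond this is needed anywhere in the argument. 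Your instinct that the sink $-ug_1$ is the delicate term is not baseless: for the paper's one-line bound, the Lipschitz constant of $\phi$ in $u$ must be uniform in $(v,w)$, which amounts to a two-sided bound on $\gamma v f - g_1$, whereas (H6) supplies only the upper bound $F$ (and the example of $g_1$ in the paper's own remark grows unboundedly in $v$ and $w$). But the correct resolution of that issue is a pointwise, structural hypothesis on $g_1$ (or a uniform reading of the Lipschitz assumption in (H4)), not the $L^1$-to-$L^q$ bootstrap you propose; with the paper's reading, the pointwise bound $|\phi|\le cu$ is available and closes the Duhamel estimate immediately.
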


\begin{proof}
Let $\theta\in\left(\frac{1}{2}(1+\frac{n}{p}),1\right)$. Rewrite the first equation as
\begin{equation*}
    u_t=D_u\Delta u-u +\phi(u,v,w)+h_1(u),\quad \phi(u,v,w):=\gamma u v f(u,v,w)-ug_1(u,v,w)+u.
\end{equation*}
 We use the variation of constants formula using the sectorial operator $\Lambda=-\Delta$:
\begin{equation*}
\begin{split}
        u(\cdot,t)&=e^{-(D_u\Lambda+1)t}u_0+\int_0^te^{-(t-s)( D_u\Lambda+1)}\phi(u(\cdot,s),v(\cdot,s),w(\cdot,s))ds\\
        &\quad +\int_0^te^{-(t-s)( D_u\Lambda+1)}h_1(u(\cdot,s))ds.
\end{split}
\end{equation*} 

 At each line below, when we move from one inequality to another, constants in front of the expressions may vary; however, we still denote them using $c$. When multiple terms are present, we use $c$ to denote the maximum among the constants. The explicit dependence on $\tau$ is noted. Using Lemma \ref{thm:semigroup}, we get that there exist $c$ and $\nu$ such that

\begin{equation*}
    \begin{aligned}
        \norm{u(\cdot,t)}_{W^{1,\infty}(\Omega)}
        &\leq c\norm{(D_u\Lambda+1)^\theta u}_{L^p(\Omega)}
        \\
        &\leq c\Big[t^{-\theta}e^{-\nu t}\norm{u_0}_{L^p(\Omega)}
        \\
        &\quad+\int_0^t(t-s)^{-\theta}e^{-\nu(t-s)}\norm{\phi(u(\cdot,s),v(\cdot,s),w(\cdot,s))}_{L^p(\Omega)}ds\\
        &\quad +\int_0^t(t-s)^{-\theta}e^{-\nu(t-s)}\norm{h_1(u(\cdot,s))}_{L^p(\Omega)}ds\Big].
    \end{aligned}
\end{equation*}
Next, we use Lipschitz continuity of $\phi(u,v,w)$ and $h(u)$, which is a consequence of (H4). Note that $\phi(0,v,w)=0$, and 

\begin{equation*}
    \begin{aligned}
     \norm{u(\cdot,t)}_{W^{1,\infty}(\Omega)}
        &\leq c\Big[t^{-\theta}e^{-\nu t}\norm{u_0}_{L^p(\Omega)}\\
        &\quad+\int_0^t(t-s)^{-\theta}e^{-\nu(t-s)}\norm{u(\cdot,s)}_{L^p(\Omega)}ds\\
 &\quad+\int_0^t(t-s)^{-\theta}e^{-\nu(t-s)}ds\Big].
            \end{aligned}
\end{equation*}

We employ part \textit{3} of Theorem \ref{thm:locex} to have the following bound $$\norm{u(\cdot,s)}_{L^p(\Omega)}\leq |\Omega|^{1/p}\norm{u(\cdot,s)}_{L^{\infty}(\Omega)}\leq c.$$ The bound is independent of the time. We also use $W^{1,p}(\Omega)$-boundedness of $u_0$. We have a substitution $\sigma:=t-s$ and extend the range of the second interval to $\infty$ to apply the definition of the $\Gamma$-function. 

    \begin{equation*}
    \begin{aligned}
     \norm{u(\cdot,t)}_{W^{1,\infty}(\Omega)}    
        &\leq c\left[t^{-\theta}+\int_0^t(t-s)^{-\theta}e^{-\nu(t-s)}ds\right]\leq c\left[t^{-\theta}+\int_0^\infty\sigma^{-\theta}e^{-\nu\sigma}d\sigma\right]\\
        &\leq c\left[t^{-\theta}+\frac{\Gamma(1-\theta)}{\nu^{1-\theta}}\right]\leq c(\tau^{-\theta}+1)=:M_1(\tau)\quad \forall t\in(\tau,T_{\max}).
    \end{aligned}
\end{equation*}

Note that $M_1$ depends only on $\tau,\Omega, p, D_u,\norm{u_0}_{W^{1,p}}$ and the parameters defined in $(H1)-(H6)$. 
\end{proof} 

The bounds for $v$ and $w$ are more difficult to establish because their equations include advection terms. Our strategy is to first derive bounds in weaker norms and then upgrade these estimates to obtain uniform bounds.

\begin{lemma}\label{lem:L2onv}
Assume the hypotheses and constants of Lemma \ref{lem:W1infonu}. Then, for any
$\tau\in(0,\min\{1,T_{\max}\})$, there exists a constant $N_1(\tau)$ such that
      \begin{equation*}
          \norm{v(\cdot,t)}_{L^{2}(\Omega)}\leq N_1(\tau)\quad \forall t\in(\tau,T_{\max}).
      \end{equation*}
\end{lemma}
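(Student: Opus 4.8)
The plan is to derive a differential inequality for $y(t):=\int_\Omega v^2\,dx$ via a standard $L^2$-energy estimate, using the logistic dissipation hidden in $h_2$ to absorb the destabilizing taxis contribution. First I would test the $v$-equation against $v$ and integrate by parts, noting that the Neumann conditions force the full flux $[D_v\nabla v-v\chi_1(u)\nabla u]\cdot\mathbf{n}$ to vanish on $\partial\Omega$. This gives
\begin{equation*}
\frac{1}{2}\frac{d}{dt}\int_\Omega v^2\,dx=-D_v\int_\Omega|\nabla v|^2\,dx+\int_\Omega v\,\chi_1(u)\,\nabla u\cdot\nabla v\,dx+\int_\Omega v^2\big[-uf(u,v,w)-g_2(u,v,w)+h_2(v)\big]\,dx.
\end{equation*}

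Next I would treat the reaction terms. By (H5) one has $u,f,g_2\ge0$ and $v\ge0$, so $-uf-g_2\le0$ and that integrand is nonpositive and may be discarded. The saturation hypothesis (H3), $h_2(v)\le A_2-B_2v$, then yields $\int_\Omega v^2h_2(v)\,dx\le A_2\int_\Omega v^2\,dx-B_2\int_\Omega v^3\,dx$, which produces the crucial superquadratic dissipation $-B_2\int_\Omega v^3$.

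The main obstacle is the taxis term $\int_\Omega v\,\chi_1(u)\,\nabla u\cdot\nabla v$. Here I would invoke the boundedness of $\chi_1$ from (H2), namely $\chi_1\le E_1$, together with the $W^{1,\infty}$-bound on $u$ from Lemma~\ref{lem:W1infonu}, namely $\norm{\nabla u(\cdot,t)}_{L^\infty(\Omega)}\le M_1(\tau)$, and then apply Young's inequality in the form $E_1M_1(\tau)\,v|\nabla v|\le\frac{D_v}{2}|\nabla v|^2+\frac{(E_1M_1(\tau))^2}{2D_v}v^2$, so that the gradient part is absorbed into the diffusion dissipation $-D_v\int_\Omega|\nabla v|^2$. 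This is exactly where the earlier $u$-gradient estimate is indispensable, and where the $\tau$-dependence of the final constant enters through $M_1(\tau)$.

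Combining the three estimates and discarding the remaining nonpositive term $-\frac{D_v}{2}\int_\Omega|\nabla v|^2$, I would arrive at
\begin{equation*}
y'(t)\le 2c_1\,y(t)-2B_2\int_\Omega v^3\,dx,\qquad c_1:=A_2+\frac{(E_1M_1(\tau))^2}{2D_v}.
\end{equation*}
Since $\Omega$ is bounded, Hölder's inequality gives $\int_\Omega v^2\le|\Omega|^{1/3}\big(\int_\Omega v^3\big)^{2/3}$, equivalently $\int_\Omega v^3\ge|\Omega|^{-1/2}y^{3/2}$, so that
\begin{equation*}
y'(t)\le 2c_1\,y(t)-2B_2|\Omega|^{-1/2}y(t)^{3/2}.
\end{equation*}
The right-hand side is negative once $y>c_1^2|\Omega|/B_2^2$, so a comparison argument for this logistic-type inequality yields $y(t)\le\max\{y(\tau),\,c_1^2|\Omega|/B_2^2\}$ for all $t\in(\tau,T_{\max})$; taking $N_1(\tau)$ to be the square root of this maximum completes the proof. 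The starting value $y(\tau)=\norm{v(\cdot,\tau)}_{L^2(\Omega)}^2$ is finite, since the solution is continuous on $\overline{\Omega}\times[0,T_{\max})$ and $v_0\in L^\infty(\Omega)$ by Sobolev embedding, and for fixed $\tau$ it is independent of $T_{\max}$, so the resulting $L^2$-bound is uniform in time as required.
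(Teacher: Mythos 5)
Your proof is correct, but it closes the estimate by a genuinely different mechanism than the paper's. Both arguments open the same way: test the $v$-equation against $v$, integrate by parts using the Neumann conditions, discard the nonpositive $-uf-g_2$ contribution via (H5), and control the taxis integral by Young's inequality together with (H2) and $M_1(\tau)$ from Lemma~\ref{lem:W1infonu}. At that point the paper throws away the cubic term, keeps the full gradient dissipation $-D_v\norm{\nabla v}_{L^2(\Omega)}^2$, and closes the loop with the Gagliardo--Nirenberg inequality \eqref{thm:GNineq} combined with the uniform $L^1$ bound $C_2$ from part \textit{2} of Theorem~\ref{thm:locex}, arriving at the linear inequality $\frac{d}{dt}\norm{v}_{L^2(\Omega)}^2+\norm{v}_{L^2(\Omega)}^2\le c_2(\tau)$. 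You instead discard the leftover diffusion and retain the superquadratic dissipation $-B_2\int_\Omega v^3\,dx$ supplied by (H3), convert it via H\"older into $-2B_2|\Omega|^{-1/2}y^{3/2}$, and run a logistic-type comparison. Your route is more elementary (no interpolation inequality, no appeal to the mass bound, explicit constants), and starting the comparison at $y(\tau)$ rather than at $t=0$ is in fact slightly more careful than the paper's wording, since the differential inequality is only valid on $(\tau,T_{\max})$. The paper's route, however, has a structural advantage that explains its choice: it never uses the reaction term's dissipation (only an upper bound $h_2\le A_2$ enters), so the identical computation applies verbatim to $w$, whose equation has no reaction term at all; this is exactly why the paper can assert that Lemmas~\ref{lem:L2onw} and~\ref{lem:L4onw} have proofs ``identical'' to those for $v$. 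Your cubic-dissipation mechanism is unavailable for $w$, so in your framework the $w$-estimates would need a separate argument --- essentially the paper's interpolation argument with the conserved mass $C_3$.
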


\begin{proof}
We differentiate the $L^2$-norm of $v$ with respect to $t$ and apply integration by parts, which is a consequence of Green's first identity 
\begin{equation*}
    \begin{aligned}
        \frac{d}{dt}\int_\Omega v^2(x,t) dx
        &=2\int_\Omega v(x,t)\frac{\partial v(x,t)}{\partial t}dx\\
        &\leq 2D_v\int_\Omega v\Delta vdx-2\int_\Omega v \nabla \cdot [v\chi_1(u)\nabla u]dx+2\int_\Omega v^2h_2(v)dx\\
        &=-2D_v\int_\Omega |\nabla v|^2dx+2\int_\Omega v\chi_1(u)\nabla v\cdot\nabla udx+2\int_\Omega v^2 h_2(v)dx.
                \end{aligned}
\end{equation*}
We work on the second integral separately. We apply Young's inequality for inner products
\begin{equation*}
    \begin{aligned}
        2\int_\Omega v\chi_1(u)\nabla v\cdot\nabla udx\leq
        D_v\int_\Omega |\nabla v|^2dx + \frac{1}{D_v}\int_\Omega \left(v\chi_1(u)|\nabla u|\right)^2dx.
    \end{aligned}
\end{equation*}
We put the right-hand side back into the original inequality, which cancels the $2$ in front of the first integral. The bounds for $h_2(v)$, $\chi_1(u)$ and $|\nabla u|$ are provided by (H3), (H2) and Lemma \ref{lem:W1infonu}, respectively. We group all the constants in front of $\int_\Omega v^2dx$ and label them $c_1(\tau)$
\begin{equation}\label{eq:v2norm1}
    \begin{aligned}
         \frac{d}{dt}\norm{v}_{L^2(\Omega)}^2\leq-D_v\norm{\nabla v}^2_{L^2(\Omega)}+c_1(\tau)\norm{v}^2_{L^2(\Omega)}.
        \end{aligned}
\end{equation}

We have to deal with the first term on the right-hand side. We apply Lemma \ref{thm:GNineq} and the general inequality $(a+b)^2\leq4(a^2+b^2)$. As before $c_2(\tau)$ may differ from line to line, but it is still proportional to $M_1(\tau)$ from Lemma \ref{lem:W1infonu}
\begin{equation}\label{eq:v2norm2}
    \begin{split}
(c_1(\tau)+1)\norm{v}_{L^2(\Omega)}^2&\leq c_2(\tau)\Big(\norm{
            \nabla v}_{L^2(\Omega)}^{1/2} \norm{v}_{L^1(\Omega)}^{1/2}+\norm{v}_{L^{1}(\Omega)}\Big)^2\\
            &\leq c_2(\tau)\Big( \norm{
            \nabla v}_{L^2(\Omega)} \norm{v}_{L^1(\Omega)}+\norm{v}_{L^{1}(\Omega)}^2\Big)\\
            &\leq D_v\norm{
            \nabla v}^2_{L^2(\Omega)} +c_2(\tau)\norm{v}_{L^{1}(\Omega)}^2,
    \end{split}
\end{equation}
where we applied Young's inequality on $ c_2(\tau)\norm{
            \nabla v}_{L^2(\Omega)} \norm{v}_{L^1(\Omega)}$.  
We add \eqref{eq:v2norm1} and \eqref{eq:v2norm2}, we have the following differential inequality
\begin{equation*}
    \begin{split}
        \frac{d}{dt}\norm{v}_{L^2(\Omega)}^2+\norm{v}_{L^2(\Omega)}^2\leq c_2(\tau).
    \end{split}
\end{equation*}

By the comparison principle, we get that 
\begin{equation*}
    \norm{v}_{L^2(\Omega)}^2\leq \max\Big\{\norm{v_0}^2_{L^2(\Omega)},c_2(\tau)\Big\}=:N_1^2(\tau).
\end{equation*}
Taking the square root on both sides completes the proof.
\end{proof}

\begin{lemma}\label{lem:L2onw}
Assume the hypotheses and constants of Lemma \ref{lem:W1infonu}. Then, for any
$\tau\in(0,\min\{1,T_{\max}\})$, there exists a constant $N_2(\tau)$ such that
      \begin{equation*}
          \norm{w(\cdot,t)}_{L^{2}(\Omega)}\leq N_2(\tau)\quad \forall t\in(\tau,T_{\max}).
      \end{equation*}
\end{lemma}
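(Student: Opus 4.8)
The plan is to mirror the energy argument of Lemma \ref{lem:L2onv}, which is in fact simpler here because the $w$-equation carries no reaction terms. First I would differentiate the squared $L^2$-norm and integrate by parts via Green's first identity, discarding the boundary integrals by the homogeneous Neumann conditions. This yields
\begin{equation*}
    \frac{d}{dt}\int_\Omega w^2\, dx = -2D_w\int_\Omega |\nabla w|^2\, dx + 2\int_\Omega w\chi_2(u)\nabla w\cdot\nabla u\, dx.
\end{equation*}

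Next I would estimate the advective cross-term with Young's inequality for inner products, splitting off a factor $D_w\int_\Omega|\nabla w|^2$ that cancels half of the good diffusive term and controlling the remainder $\frac{1}{D_w}\int_\Omega(w\chi_2(u)|\nabla u|)^2$ through the bound $\chi_2\leq E_2$ from (H2) together with the $W^{1,\infty}$-estimate on $u$ from Lemma \ref{lem:W1infonu}. Grouping the resulting constants into a single $c_1(\tau)$ produces a differential inequality strictly parallel to \eqref{eq:v2norm1},
\begin{equation*}
    \frac{d}{dt}\norm{w}_{L^2(\Omega)}^2 \leq -D_w\norm{\nabla w}_{L^2(\Omega)}^2 + c_1(\tau)\norm{w}_{L^2(\Omega)}^2.
\end{equation*}

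Then I would absorb the $L^2$ term into the gradient term using the Gagliardo-Nirenberg inequality (Lemma \ref{thm:GNineq}), expressing $\norm{w}_{L^2}^2$ in terms of $\norm{\nabla w}_{L^2}$ and $\norm{w}_{L^1}$ and applying Young's inequality, exactly as in \eqref{eq:v2norm2}. The crucial input is that the $L^1$-mass of $w$ is not merely bounded but exactly conserved, $\norm{w}_{L^1(\Omega)}=C_3$, by part \textit{2} of Theorem \ref{thm:locex}. After reincorporating the estimate one reaches
\begin{equation*}
    \frac{d}{dt}\norm{w}_{L^2(\Omega)}^2 + \norm{w}_{L^2(\Omega)}^2 \leq c_2(\tau),
\end{equation*}
and the comparison principle gives $\norm{w}_{L^2(\Omega)}^2\leq\max\{\norm{w_0}_{L^2(\Omega)}^2, c_2(\tau)\}=:N_2^2(\tau)$, whence the claim follows upon taking square roots.

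The only point requiring attention is that, unlike the $v$-equation, no reaction term supplies dissipation, so the entire estimate must be closed through the diffusion-advection balance alone; the Gagliardo-Nirenberg step is therefore the load-bearing ingredient. I expect no genuine obstacle here: since the guardian diffuses and advects but neither grows nor decays, its conserved mass $C_3$ feeds directly into the $\norm{w}_{L^1}$ factor, and the argument is in every respect easier than the one carried out for $v$.
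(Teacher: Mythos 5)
Your proposal is correct and is exactly the paper's argument: the paper proves Lemma \ref{lem:L2onw} by declaring it ``identical to the proof for Lemma \ref{lem:L2onv},'' and your write-up is precisely that proof transplanted to $w$, with the reaction term dropped and the conserved mass $C_3$ feeding the $L^1$ factor in the Gagliardo--Nirenberg step. Your closing remark is slightly misleading only in that the reaction term supplies no dissipation in the $v$-proof either (there it is simply bounded above via (H3) and absorbed into $c_1(\tau)$), so the $w$-case is easier merely by having one fewer term to estimate.
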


\begin{proof}
The proof is identical to the proof for Lemma \ref{lem:L2onv}.
\end{proof}

We prove the last weak bound.

\begin{lemma}\label{lem:L4onv}
From Lemma \ref{lem:L2onv}, let assumptions hold and constants be defined. Then, for any $\tau\in(0,\min\{1,T_{\max}\})$, there exists constant $N_3(\tau)$ such that
      \begin{equation*}
          \norm{v(\cdot,t)}_{L^{4}(\Omega)}\leq N_3(\tau)\quad \forall t\in(\tau,T_{\max}).
      \end{equation*}
\end{lemma}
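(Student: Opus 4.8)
The plan is to run an $L^4$ energy estimate on $v$ that mirrors the $L^2$ argument of Lemma \ref{lem:L2onv}, but now using the previously obtained weak bounds as the lower-order anchor. Concretely, I would differentiate $\int_\Omega v^4\,dx$ in time, substitute the $v$-equation, and integrate by parts via Green's identity (using the homogeneous Neumann condition). The pure-diffusion part yields a favourable dissipation term proportional to $-\int_\Omega v^2|\nabla v|^2\,dx=-\tfrac14\int_\Omega|\nabla(v^2)|^2\,dx$; the advection part produces a cross term of the form $\int_\Omega v^3\chi_1(u)\nabla v\cdot\nabla u\,dx$; and the reaction part contributes $\int_\Omega v^4\big(-uf-g_2+h_2(v)\big)\,dx$.

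For the reaction terms I would invoke (H5) to discard the nonnegative loss contributions $-uf$ and $-g_2$ (recall $u\geq0$ and $v^4\geq0$), and (H3) to bound $h_2(v)\leq A_2-B_2 v$, so that $\int_\Omega v^4 h_2(v)\,dx\leq A_2\int_\Omega v^4\,dx$ after dropping the nonpositive $-B_2\int_\Omega v^5\,dx$ piece. The decisive term is the advection one. Here I would bound $\chi_1(u)\leq E_1$ by (H2) and $|\nabla u|\leq M_1(\tau)$ by Lemma \ref{lem:W1infonu}, rewrite $v^3|\nabla v|=\tfrac12\, v^2\,|\nabla(v^2)|$, and apply Young's inequality to split the term into $\varepsilon\int_\Omega|\nabla(v^2)|^2\,dx + C_\varepsilon\int_\Omega v^4\,dx$. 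Choosing $\varepsilon$ small enough to be absorbed by the diffusion dissipation leaves a differential inequality of the form $\frac{d}{dt}\int_\Omega v^4\,dx\leq -c\int_\Omega|\nabla(v^2)|^2\,dx + c_1(\tau)\int_\Omega v^4\,dx$.

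Setting $\varphi:=v^2$, this reads $\frac{d}{dt}\norm{\varphi}_{L^2(\Omega)}^2\leq -c\,\norm{\nabla\varphi}_{L^2(\Omega)}^2 + c_1(\tau)\norm{\varphi}_{L^2(\Omega)}^2$, which is structurally identical to \eqref{eq:v2norm1}. The observation that closes the estimate is that the anchoring lower-order norm is $\norm{\varphi}_{L^1(\Omega)}=\norm{v}_{L^2(\Omega)}^2\leq N_1^2(\tau)$, which is exactly what Lemma \ref{lem:L2onv} supplies. I would then apply the Gagliardo--Nirenberg inequality (Lemma \ref{thm:GNineq}) in dimension $n=2$ together with Young's inequality, exactly as in \eqref{eq:v2norm2}, to absorb $(c_1(\tau)+1)\norm{\varphi}_{L^2(\Omega)}^2$ into $c\,\norm{\nabla\varphi}_{L^2(\Omega)}^2$ plus a term controlled by $\norm{\varphi}_{L^1(\Omega)}^2\leq N_1^4(\tau)$. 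Adding the resulting inequalities gives $\frac{d}{dt}\norm{\varphi}_{L^2(\Omega)}^2+\norm{\varphi}_{L^2(\Omega)}^2\leq c_2(\tau)$, and the comparison principle yields $\norm{v}_{L^4(\Omega)}^4=\norm{\varphi}_{L^2(\Omega)}^2\leq\max\{\norm{v_0}_{L^4(\Omega)}^4,\,c_2(\tau)\}=:N_3^4(\tau)$; taking fourth roots finishes.

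The main obstacle is the advection term: one must convert $v^3\nabla v\cdot\nabla u$ into a shape whose gradient part is genuinely absorbable by the $-\int_\Omega|\nabla(v^2)|^2$ dissipation while the remainder is only an $L^4$ quantity, and this is precisely where boundedness of $\chi_1$ (H2) and the $W^{1,\infty}$-control of $u$ (Lemma \ref{lem:W1infonu}) are indispensable. The second, more conceptual point is that the $L^4$ estimate cannot close on its own; it relies on the $L^2$ bound of Lemma \ref{lem:L2onv} playing the role of the $L^1$ anchor in the Gagliardo--Nirenberg interpolation, which is exactly why the weaker bounds were established first.
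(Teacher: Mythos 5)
Your proposal is correct and follows essentially the same argument as the paper: an $L^4$ energy estimate in which the reaction terms are discarded via (H5) and bounded via (H3), the advection cross term is absorbed into the $-\int_\Omega|\nabla (v^2)|^2\,dx$ dissipation using (H2), Lemma \ref{lem:W1infonu}, and Young's inequality, and the estimate is closed by applying Gagliardo--Nirenberg to $v^2$ with $\norm{v^2}_{L^1(\Omega)}=\norm{v}_{L^2(\Omega)}^2$ (Lemma \ref{lem:L2onv}) as the anchor, followed by the comparison principle. Your identification of why the $L^2$ bound must come first --- it serves as the $L^1$ anchor in the interpolation for $v^2$ --- is exactly the mechanism the paper uses.
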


\begin{proof}
 
The approach is very similar to the one in Lemma \ref{lem:L2onv}, hence the repeating details are omitted. $c_1(\tau)$ and $c_2(\tau)$ serve the same purpose as before.
\begin{equation*}
    \begin{split}
        \frac{d}{dt}\int_\Omega v^4(x,t) dx&=4\int_\Omega v^3(x,t)\frac{\partial v(x,t)}{\partial t}dx\\
        &\leq 4D_v\int_\Omega v^3\Delta vdx-4\int_\Omega v^3 \nabla \cdot [v\chi_1(u)\nabla u]dx +4\int_\Omega v^4h_2(v)dx\\
        &=-12D_v\int_\Omega v^2 |\nabla v|^2dx+12\int_\Omega v^3\chi_1(u)\nabla v\cdot\nabla udx +4\int_\Omega v^4 h_2(v)dx\\
        &=-3D_v\int_\Omega|\nabla v^2|^2dx+6\int_\Omega v^2\chi_1(u)\nabla v^2\cdot\nabla udx +4\int_\Omega v^4 h_2(v)dx\\
        &\leq -D_v\int_\Omega |\nabla v^2|^2dx+\int_\Omega \frac{9\chi_1(u)^2}{2D_v}v^4|\nabla u|^2dx +4\int_\Omega v^4 h_2(v)dx\\
        &\leq -D_v\int_\Omega |\nabla v^2|^2dx+c_1(\tau)\int_\Omega v^4dx\\
        &=-D_v\int_\Omega |\nabla v^2|^2dx+c_1(\tau)\int_\Omega v^4dx.
        \end{split}
\end{equation*}

We now apply Lemma \ref{thm:GNineq} to $v^2$
\begin{equation*}
    \begin{split}
 (c_1(\tau)+1)\norm{v^2}_{L^2(\Omega)}^2&\leq c_2(\tau)\bigg(\norm{
            \nabla v^2}_{L^2(\Omega)}^{1/2} \norm{v^2}_{L^1(\Omega)}^{1/2}+\norm{v^2}_{L^{1}(\Omega)}\bigg)^2\\
            &\leq c_2(\tau)\bigg( \norm{
            \nabla v^2}_{L^2(\Omega)} \norm{v^2}_{L^1(\Omega)}+\norm{v^2}_{L^{1}(\Omega)}^2\bigg)\\
            &\leq D_v\norm{
            \nabla v^2}^2_{L^2(\Omega)} +c_2(\tau)\norm{v^2}_{L^{1}(\Omega)}^2.
    \end{split}
\end{equation*}
Note that $\norm{v^2}_{L^2(\Omega)}^2=\norm{v}_{L^4(\Omega)}^4$ and $\norm{v^2}_{L^1(\Omega)}^2=\norm{v}_{L^2(\Omega)}^4$.
Combining everything, we have
\begin{equation*}
    \begin{split}
        \frac{d}{dt}\norm{v}_{L^4(\Omega)}^4+\norm{v}_{L^4(\Omega)}^4\leq c_2(\tau).
    \end{split}
\end{equation*}

By the comparison principle, we get that 
\begin{equation*}
    \norm{v}_{L^4(\Omega)}^4\leq \max\Big\{\norm{v_0}^4_{L^4(\Omega)},c_2(\tau)\Big\}=:N_3^4(\tau).
\end{equation*}
By taking the fourth root on both sides, we complete the proof.
   
\end{proof}

\begin{lemma}\label{lem:L4onw}
From Lemma \ref{lem:L2onw}, let assumptions hold and constants be defined. Then for any
$\tau\in(0,\min\{1,T_{\max}\})$, there exists a constant $N_4(\tau)$ such that
      \begin{equation*}
          \norm{w(\cdot,t)}_{L^{2}(\Omega)}\leq N_4(\tau)\quad \forall t\in(\tau,T_{\max}).
      \end{equation*}
\end{lemma}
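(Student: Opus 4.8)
The plan is to replicate the scheme of Lemma \ref{lem:L4onv}, taking advantage of the fact that the $w$-equation contains no reaction term whatsoever—only diffusion and advection—so the estimate is strictly simpler. (We read the conclusion as an $L^4$ bound on $w$, matching both the label and the parallel role that $w$ plays to $v$.) First I would differentiate $\int_\Omega w^4\,dx$ in time, substitute the third equation of \eqref{eq:themainsys}, and integrate by parts using the Neumann condition. Writing $w^2|\nabla w|^2=\tfrac14|\nabla w^2|^2$ and $w^3\nabla w=\tfrac12 w^2\nabla w^2$ as in the $v$-computation, this produces
\begin{equation*}
\frac{d}{dt}\norm{w}_{L^4(\Omega)}^4
= -3 D_w\int_\Omega |\nabla w^2|^2\,dx
+ 6\int_\Omega w^2\chi_2(u)\,\nabla w^2\cdot\nabla u\,dx,
\end{equation*}
with no analogue of the $4\int_\Omega v^4 h_2(v)\,dx$ term that appeared for $v$.

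Next I would estimate the advection integral by Young's inequality for inner products, choosing the weight so that exactly $2D_w\int_\Omega|\nabla w^2|^2$ is split off and absorbed into the diffusion term (leaving $-D_w\int_\Omega|\nabla w^2|^2$), while the complementary piece is $\tfrac{9}{2D_w}\int_\Omega w^4\chi_2(u)^2|\nabla u|^2\,dx$. Here (H2) bounds $\chi_2(u)^2\leq E_2^2$ and Lemma \ref{lem:W1infonu} bounds $|\nabla u|^2\leq M_1(\tau)^2$, so collecting constants yields
\begin{equation*}
\frac{d}{dt}\norm{w}_{L^4(\Omega)}^4
\leq -D_w\int_\Omega |\nabla w^2|^2\,dx + c_1(\tau)\int_\Omega w^4\,dx.
\end{equation*}

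Finally I would apply the Gagliardo–Nirenberg inequality of Lemma \ref{thm:GNineq} to $w^2$ exactly as in \eqref{eq:v2norm2}, controlling the low-order factor $\norm{w^2}_{L^1(\Omega)}=\norm{w}_{L^2(\Omega)}^2$ by $N_2(\tau)^2$ from Lemma \ref{lem:L2onw}, and using Young's inequality once more to absorb the surviving gradient term into $-D_w\norm{\nabla w^2}_{L^2(\Omega)}^2$. This delivers $\tfrac{d}{dt}\norm{w}_{L^4(\Omega)}^4+\norm{w}_{L^4(\Omega)}^4\leq c_2(\tau)$, and the comparison principle gives $\norm{w}_{L^4(\Omega)}^4\leq\max\{\norm{w_0}_{L^4(\Omega)}^4,\,c_2(\tau)\}=:N_4^4(\tau)$.

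Since there is no reaction term to estimate, I do not expect a genuine obstacle; the argument is a clean specialization of Lemma \ref{lem:L4onv}. The only point requiring attention is bookkeeping: the constant $c_1(\tau)$ now collects solely the advection contribution (through $E_2$ and $M_1(\tau)$) with no $h_2$-type piece, and the $L^2$-bound fed into the Gagliardo–Nirenberg step must be the $w$-bound $N_2(\tau)$ from Lemma \ref{lem:L2onw} rather than the $v$-bound $N_1(\tau)$.
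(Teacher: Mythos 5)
Your proposal is correct and takes exactly the approach the paper intends: the paper's own proof is literally the one-line remark that it is identical to Lemma \ref{lem:L4onv}, and your write-up is precisely that argument specialized to $w$ (no reaction term, $\chi_2$ and $E_2$ in place of $\chi_1$ and $E_1$, and the $L^2$ input $N_2(\tau)$ from Lemma \ref{lem:L2onw}). You also correctly read the stated $L^2$ norm as a typo for $L^4$, which is consistent with how $N_4(\tau)$ is later used in Lemma \ref{lem:Linfonv} to bound $\norm{w(\cdot,s)}_{L^p(\Omega)}$ with $p=4$.
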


\begin{proof}

The proof is identical to the proof for Lemma \ref{lem:L4onv}.
    
\end{proof}

We now have sufficient regularity to extrapolate to infinity. 

\begin{lemma}\label{lem:Linfonv}
From Lemmas \ref{lem:L4onv} and \ref{lem:L4onw}, let assumptions hold and constants be defined. Then for any $\tau\in(0,\min\{1,T_{\max}\})$, there exists constant $M_2(\tau)$ such that
      \begin{equation*}
          \norm{v(\cdot,t)}_{L^{\infty}(\Omega)}\leq M_2(\tau)\quad \forall t\in(\tau,T_{\max}).
      \end{equation*}
\end{lemma}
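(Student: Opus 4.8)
The plan is to bootstrap the $L^4$-bound of Lemma \ref{lem:L4onv} up to an $L^\infty$-bound by a variation-of-constants argument, in the spirit of Lemma \ref{lem:W1infonu}, but now treating the advection as a forcing term in divergence form. I would first rewrite the $v$-equation as
\begin{equation*}
    v_t = D_v\Delta v - v - \nabla\cdot\big(v\chi_1(u)\nabla u\big) + \big[R_v+v\big], \qquad R_v := v\big[-uf(u,v,w)-g_2(u,v,w)+h_2(v)\big],
\end{equation*}
so that the sectorial operator $\mathcal{A}=D_v\Lambda+1$ (with $\Lambda=-\Delta$ under Neumann conditions) again produces an exponential factor $e^{-\nu t}$ in the semigroup estimates. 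To reconcile the starting point of the Duhamel formula with the time windows of the earlier lemmas, I would integrate not from $0$ but from $t_0:=\tau/2$, writing for $t\in(\tau,T_{\max})$
\begin{equation*}
    v(\cdot,t) = e^{-(t-t_0)\mathcal{A}}v(\cdot,t_0) + \int_{t_0}^t e^{-(t-s)\mathcal{A}}\Big[-\nabla\cdot\big(v\chi_1(u)\nabla u\big)+R_v+v\Big](\cdot,s)\,ds.
\end{equation*}
On $(t_0,T_{\max})$ Lemma \ref{lem:L4onv} yields $\norm{v(\cdot,s)}_{L^4(\Omega)}\le N_3(\tau/2)$ and Lemma \ref{lem:W1infonu} yields $\norm{\nabla u(\cdot,s)}_{L^\infty(\Omega)}\le M_1(\tau/2)$, so every quantity needed is already controlled uniformly in $s$.

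I would then estimate the three contributions separately with the Neumann-semigroup bounds of Lemma \ref{thm:semigroup}. For the advection integral I would use the divergence estimate $\norm{e^{-\sigma\mathcal A}\nabla\cdot z}_{L^\infty(\Omega)}\le c\,(1+\sigma^{-1/2-n/(2q)})e^{-\nu\sigma}\norm{z}_{L^q(\Omega)}$ with $q=4$, $n=2$, together with $\chi_1(u)\le E_1$ from (H2) and the bounds above, giving an integrand controlled by $c\,(t-s)^{-3/4}e^{-\nu(t-s)}$; since $3/4<1$ the time singularity is integrable and the integral is bounded uniformly in $t$. This is exactly why the $L^4$-bound, rather than merely the $L^2=L^n$ bound, was needed. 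For the reaction integral the decisive point is the sign structure: by (H5) the terms $-uvf$ and $-vg_2$ are nonpositive, while (H3) gives $v\,h_2(v)\le A_2 v - B_2 v^2\le A_2 v$, so pointwise $R_v+v\le(1+A_2)v$. Because $e^{-\sigma\mathcal A}$ is order preserving, this yields the domination $\int_{t_0}^t e^{-(t-s)\mathcal A}(R_v+v)\,ds\le (1+A_2)\int_{t_0}^t e^{-(t-s)\mathcal A}v\,ds$, which I can estimate by the zeroth-order bound $\norm{e^{-\sigma\mathcal A}z}_{L^\infty(\Omega)}\le c\,(1+\sigma^{-n/(2q)})e^{-\nu\sigma}\norm{z}_{L^q(\Omega)}$, again with $q=4$, producing an integrand $\lesssim(t-s)^{-1/4}e^{-\nu(t-s)}$ that is integrable. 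The initial term is handled by the same zeroth-order estimate applied to $v(\cdot,t_0)\in L^4(\Omega)$, using $t-t_0>\tau/2$ to bound the prefactor.

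I expect the reaction term to be the main obstacle. Unlike $h_2(v)\le A_2-B_2 v$, there is no a priori lower bound on $h_2$, and $f,g_2$ are evaluated at $(u,v,w)$ whose $v,w$-components are not yet known to be bounded; hence $\norm{R_v}_{L^4(\Omega)}$ cannot be controlled by a naive triangle inequality. The resolution is to avoid bounding $R_v$ in norm at all and instead exploit the one-sided estimate $R_v+v\le(1+A_2)v$ together with positivity of the semigroup, reducing the reaction to a harmless linear term in the same way the negative reaction terms were discarded in the energy estimates of Lemma \ref{lem:L4onv}. Since $v\ge0$, bounding the sign-indefinite advection contribution by its $L^\infty$-norm and the nonnegative initial and reaction contributions by theirs gives $\norm{v(\cdot,t)}_{L^\infty(\Omega)}\le M_2(\tau)$ for all $t\in(\tau,T_{\max})$, with $M_2(\tau)$ depending only on $\tau,\Omega,p,D_v,E_1,A_2$ and the constants of the preceding lemmas. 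The companion bound for $w$ in Lemma \ref{lem:L4onw} follows from the identical argument with $\chi_2\le E_2$ and a trivial reaction term.
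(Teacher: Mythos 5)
Your proof is correct, and its overall architecture---Duhamel's formula for $\mathcal{A}=D_v\Lambda+1$, splitting into initial, advection, and reaction contributions, and invoking the semigroup smoothing estimates with $q=4$ so that the divergence term carries the integrable singularity $(t-s)^{-3/4}$---is exactly the paper's strategy (the paper splits into $V_1+V_2+V_3$ and uses Lemma \ref{thm:semigroup} in the same way, with the same observation that the $L^4$ bound, not the $L^2$ bound, is what makes the advection integral converge). You deviate in two technical respects, and both deviations are improvements over the paper's write-up. First, you start the variation-of-constants formula at $t_0=\tau/2$ rather than at $0$: the paper integrates from $0$ to $t$ but then bounds $\norm{v(\cdot,s)\chi_1(u(\cdot,s))\nabla u(\cdot,s)}_{L^p}$ by constants $M_1(\tau),N_3(\tau)$ that Lemmas \ref{lem:W1infonu} and \ref{lem:L4onv} only furnish for $s>\tau$, so strictly speaking the paper's integrand is uncontrolled on $(0,\tau)$; your shifted starting point (paying for it with $v(\cdot,t_0)\in L^4$ instead of $v_0\in W^{1,p}$) removes this mismatch cleanly. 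Second, for the reaction term the paper asserts that $\varphi=-uvf-vg_2+vh_2(v)+v$ is Lipschitz by (H4) and bounds $\norm{\varphi}_{L^p}$ by $\norm{u}_{L^p}+\norm{v}_{L^p}+\norm{w}_{L^p}$; as you correctly point out, (H4) only gives local Lipschitz continuity, and since $v,w$ are not yet known to be bounded (indeed $vh_2(v)$ may be genuinely quadratic, e.g.\ $h_2(v)=A_2-B_2v$), this step is not justified as written. Your replacement---the one-sided pointwise bound $R_v+v\le(1+A_2)v$ from (H3), (H5) and nonnegativity, combined with order preservation of the Neumann heat semigroup, which suffices because $v\ge0$ means only an upper bound is needed---is the clean way to close this gap, and it mirrors how the negative reaction terms were discarded in the energy estimates. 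The only blemishes are cosmetic: your final sentence cites Lemma \ref{lem:L4onw} where you mean the $L^\infty$ companion Lemma \ref{lem:Linfonw}, and to quote the $L^4$ bound at the exact time $t_0=\tau/2$ you should either take $t_0$ slightly larger than $\tau/2$ or invoke the bound with parameter $\tau/4$.
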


\begin{proof}

Let $\theta\in\left(\frac{1}{2}(1+\frac{n}{p}),1\right)$. Rewrite the second equation as
\begin{equation*}
    \begin{split}
            &v_t=D_v\Delta v-v-\nabla\cdot(v\chi_1(u)\nabla u) +\varphi(u,v,w),\\
            &\varphi(u,v,w):=- u v f(u,v,w)-vg_2(u,v,w)+vh_2(v)+v.
    \end{split}
\end{equation*}
We use the variation of constants formula using the sectorial operator $\Lambda=-\Delta$
\begin{equation*}
\begin{split}
        v(\cdot,t)&=e^{-(D_v\Lambda+1)t}v_0-\int_0^te^{-(t-s)( D_v\Lambda+1)}\nabla\cdot(v(\cdot,s)\chi_1(u(\cdot,s))\nabla u(\cdot,s))ds\\
        &+\int_0^te^{-(t-s)( D_v\Lambda+1)}\varphi(u(\cdot,s),v(\cdot,s),w(\cdot,s))ds\\
        &=:V_1+V_2+V_3.
\end{split}
\end{equation*}

We use semigroup results with $m=0$, $q=\infty$, and $p=4>n$. We need to prove that each of $V_i$ is bounded with the bounds dependent on $\tau$, but not $T_{\max}$. As usual, $c_i$ and $C_i(\tau)$ may vary between consecutive inequalities for $i=1,\,2,\,3$. All $C_i(\tau)$ are inversely proportional to $\tau$.

By Lemma \ref{thm:semigroup}, for $V_1$, let $\theta_1\in(\frac{n}{2p},1)$, then there exist $c_1>0$  and $\nu_1>0$ such that
\begin{equation*}
    \begin{split}
        \norm{V_1}_{L^{\infty}(\Omega)}&\leq c_1\norm{(D_v\Lambda+1)^{\theta_1} e^{-(D_v\Lambda+1)t}v_0}_{L^{p}(\Omega)}\leq c_1 t^{-\theta_1}e^{-\nu_1 t} \norm{v_0}_{L^p(\Omega)}\\
        &\leq C_1(\tau)\norm{v_0}_{W^{1,p}(\Omega)}\leq C_1(\tau).
    \end{split}
\end{equation*}

By Lemma \ref{thm:semigroup}, for $V_2$, let $\theta_2\in\left(\frac{n}{2p},\frac{1}{2}\right)$ and $\varepsilon\in\left(0,\frac{1}{2}-\theta_2\right)$, then there exist $c_2>0$ and $\mu>0$ such that

\begin{equation*}
    \begin{split}
        \norm{V_2}_{L^{\infty}(\Omega)}&\leq c\norm{(D_v\Lambda+1)^{\theta_2} V_2}_{L^{p}(\Omega)}\\
        &=c_2\norm{(D_v\Lambda+1)^{\theta_2}\int_0^te^{-(t-s)( D_v\Lambda+1)}\nabla\cdot\Big(v(\cdot,s)\chi_1(u(\cdot,s))\nabla u(\cdot,s)\Big)ds}_{L^{p}(\Omega)}\\
        &\leq c_2\int_0^t\norm{(D_v\Lambda+1)^{\theta_2}e^{-(t-s)( D_v\Lambda+1)}\nabla\cdot\Big(v(\cdot,s)\chi_1(u(\cdot,s))\nabla u(\cdot,s)\Big)}_{L^{p}(\Omega)}ds\\
        &=c_2\int_0^te^{-(t-s)}\norm{(D_v\Lambda+1)^{\theta_2}e^{-(t-s) D_v\Lambda}\nabla\cdot\Big(v(\cdot,s)\chi_1(u(\cdot,s))\nabla u(\cdot,s)\Big)}_{L^{p}(\Omega)}ds\\
        &\leq c_2\int_0^t(t-s)^{-\theta_2-\frac{1}{2}-\varepsilon}e^{-(t-s)(\mu+1)}\norm{v(\cdot,s)\chi_1(u(\cdot,s))\nabla u(\cdot,s)}_{L^{p}(\Omega)}ds\\
        &\leq C_2(\tau)\int_0^t(t-s)^{-\theta_2-\frac{1}{2}-\varepsilon}e^{-(t-s)(\mu+1)}ds\leq C_2(\tau)\int_0^\infty\sigma^{-\theta_2-\frac{1}{2}-\varepsilon}e^{-\sigma(\mu+1)}d\sigma\\
        &\leq C_2(\tau) \frac{\Gamma (\frac{1}{2}-\theta_2-\varepsilon)}{(\mu+1)^{(\frac{1}{2}-\theta_2-\varepsilon)}}\leq C_2(\tau),
    \end{split}
\end{equation*}
where we $C_2(\tau)$ is proportional to $M_1(\tau)$ and $N_3(\tau)$ from Lemmas \ref{lem:W1infonu} and \ref{lem:L4onv}, respectively.

For $V_3$, we know that $\varphi(u,v,w)$ is Lipschitz continuous. Let $\theta_3\in\left(\frac{n}{2p},1\right)$, then there exist $c_3>0$ and $\nu_2>0$ such that: 
\begin{equation*}
    \begin{split}
        \norm{V_3}_{L^{\infty}(\Omega)}&\leq c_3\norm{(D_v\Lambda+1)^{\theta_3} v}_{L^p(\Omega)}\\
        &\leq c_3\int_0^t(t-s)^{-\theta_3}e^{-\nu_2(t-s)}\norm{\varphi(u(\cdot,s),v(\cdot,s),w(\cdot,s))}_{L^p(\Omega)}ds\\
        &\leq c_3\int_0^t(t-s)^{-\theta_3}e^{-\nu_2(t-s)}\Big(\norm{u(\cdot,s)}_{L^p(\Omega)}+\norm{v(\cdot,s)}_{L^p(\Omega)}+\norm{w(\cdot,s)}_{L^p(\Omega)}\Big)ds.\\
            \end{split}
\end{equation*}
Lemmas \ref{lem:W1infonu}, \ref{lem:L4onv} and \ref{lem:L4onw} allow us to bound $\norm{u(\cdot,s)}_{L^p(\Omega)}$, $\norm{v(\cdot,s)}_{L^p(\Omega)}$, and $\norm{w(\cdot,s)}_{L^p(\Omega)})ds$ using $M_1(\tau)$, $N_3(\tau)$, and $N_4(\tau)$, respectively. We introduce $C_3(\tau)$ to denote the overall bound on these terms.
\begin{equation*}
    \begin{split}
         \norm{V_3}_{L^{\infty}(\Omega)}&\leq C_3(\tau)\int_0^t(t-s)^{-\theta_3}e^{-\nu_2(t-s)}ds\leq C_3(\tau)\int_0^\infty\sigma^{-\theta_3}e^{-\nu_2\sigma}d\sigma\\
         &\leq C_3(\tau)\frac{\Gamma(1-\theta_3)}{\nu_2^{1-\theta_3}}\leq C_3(\tau).
    \end{split}
\end{equation*}

We take the maximum of $C_1(\tau)$, $C_2(\tau)$, and $C_3(\tau)$ and obtain the bound on $\norm{v_1(\cdot,t)}_{L^\infty(\Omega)}$ (dependent on $\tau$ only).
\end{proof}

\begin{lemma}\label{lem:Linfonw}
From Lemma \ref{lem:L4onw}, let assumptions hold and constants be defined. Then for any $\tau\in(0,\min\{1,T_{\max}\})$, there exists constant $M_3(\tau)$ such that
      \begin{equation*}
          \norm{w(\cdot,t)}_{L^{\infty}(\Omega)}\leq M_3(\tau)\quad \forall t\in(\tau,T_{\max}).
      \end{equation*}
\end{lemma}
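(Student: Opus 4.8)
The plan is to follow the semigroup argument of Lemma~\ref{lem:Linfonv} almost verbatim, exploiting the fact that the $w$-equation is actually \emph{simpler} because it carries no reaction terms. First I would rewrite the third equation of \eqref{eq:themainsys} as
\begin{equation*}
    w_t = D_w\Delta w - w - \nabla\cdot\big(w\chi_2(u)\nabla u\big) + w,
\end{equation*}
adding and subtracting $w$ so that the operator $D_w\Lambda+1$, with $\Lambda=-\Delta$, is sectorial with a genuine spectral gap; this is what produces the exponentially decaying factors $e^{-\nu(t-s)}$ in Lemma~\ref{thm:semigroup}. The variation-of-constants formula then splits the solution as $w(\cdot,t)=W_1+W_2+W_3$, where $W_1=e^{-(D_w\Lambda+1)t}w_0$, the drift term $W_2=-\int_0^t e^{-(t-s)(D_w\Lambda+1)}\nabla\cdot(w\chi_2(u)\nabla u)\,ds$, and the residual term $W_3=\int_0^t e^{-(t-s)(D_w\Lambda+1)}w\,ds$.

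The three pieces are then estimated in $L^{\infty}(\Omega)$ exactly as $V_1,V_2,V_3$ were, taking $m=0$, $q=\infty$, and $p=4>n$. For $W_1$ I would pick $\theta_1\in(\tfrac{n}{2p},1)$ and use the $W^{1,p}$-boundedness of $w_0$ together with the decay $t^{-\theta_1}e^{-\nu_1 t}$, which on $(\tau,T_{\max})$ is controlled by $\tau^{-\theta_1}$. For $W_3$, the integrand norm $\norm{w(\cdot,s)}_{L^p(\Omega)}$ is bounded directly by the $L^4$ bound $N_4(\tau)$ of Lemma~\ref{lem:L4onw}, and integrating $(t-s)^{-\theta_3}e^{-\nu_2(t-s)}$ over $(0,\infty)$ yields the finite factor $\Gamma(1-\theta_3)/\nu_2^{1-\theta_3}$; this step replaces the more elaborate $V_3$ estimate, since here there is no $f$, $g_2$, or $h_2$ to handle, only the linear term $w$.

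The decisive term, and the only genuine obstacle, is the drift $W_2$, treated precisely as $V_2$. I would choose $\theta_2\in(\tfrac{n}{2p},\tfrac12)$ and $\varepsilon\in(0,\tfrac12-\theta_2)$, so that commuting the divergence through the semigroup costs exactly $\tfrac12+\varepsilon$ derivatives, leaving the singular kernel $(t-s)^{-\theta_2-\frac12-\varepsilon}e^{-(t-s)(\mu+1)}$. Since $n=2$ and $p=4$ we have $\tfrac{n}{2p}=\tfrac14$, so the admissible window $\theta_2\in(\tfrac14,\tfrac12)$ is nonempty and the exponent $\theta_2+\tfrac12+\varepsilon$ stays strictly below $1$, keeping the integral convergent at $s=t$. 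The factor $\norm{w\chi_2(u)\nabla u}_{L^p(\Omega)}$ is then bounded using $\chi_2(u)\le E_2$ from (H2), $\norm{\nabla u}_{L^\infty(\Omega)}\le M_1(\tau)$ from Lemma~\ref{lem:W1infonu}, and the $L^4$ bound $N_4(\tau)$ from Lemma~\ref{lem:L4onw}. Taking the maximum of the resulting three $\tau$-dependent constants yields $M_3(\tau)$ and completes the proof.
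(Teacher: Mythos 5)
Your proposal is correct and takes essentially the same approach as the paper: the paper's proof of Lemma~\ref{lem:Linfonw} simply declares it identical to the semigroup argument of Lemma~\ref{lem:Linfonv}, which is precisely the adaptation you carry out, including the accurate observation that the $w$-equation is simpler since the only ``reaction'' term is the artificially added $+w$. Your decomposition $W_1+W_2+W_3$, the choice of exponents $\theta_1,\theta_2,\theta_3$ and $\varepsilon$, and the use of (H2), $M_1(\tau)$, and $N_4(\tau)$ to control the drift term mirror the paper's treatment of $V_1$, $V_2$, $V_3$ exactly.
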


\begin{proof}
    The proof is identical to the proof for Lemma \ref{lem:Linfonv}.
\end{proof}
\begin{proof}(\textbf{Theorem \ref{thm:globex}}) Lemmas \ref{lem:W1infonu}, \ref{lem:Linfonv}, \ref{lem:Linfonw}, and part \textit{4} in Theorem \ref{thm:locex} provide the desired result. 
\end{proof}

\begin{remark}
    Note that $\tau$ is fixed and non-zero. It has been shown that in RAD systems, such as \eqref{eq:themainsys}, one can choose $\tau$ small enough so that the corresponding operator is a contraction map \cite{horstmann2005boundedness}. For $[0,\tau]$, a fixed-point theorem provides the bound, and for $(\tau,\infty)$, our proof provides the bound. Then, the larger bound between the two provides the needed uniformity. 
\end{remark}

\section{Linear stability analysis}\label{Sec:linst}
In this section, we investigate the fundamental dynamical properties of system \eqref{eq:themainsys}. Note that the general form of the trivial constant steady state is $(\overline{u},0,\overline{w})$, where 
\begin{equation*}
    \overline{w}=\frac{1}{|\Omega|}\int_\Omega w_0\; dx,
\end{equation*}
and $\overline{u}$ is a solution of the following equation
\begin{equation*}
    -sg_1(s,0,\overline{w})+h_1(s)=0.
\end{equation*}

For the remainder of the paper, we assume that $\overline{u}$ exists and analyze the linear stability of this trivial steady state. To do so, we first consider perturbations of the form
\begin{equation*}
    u=\overline{u}+\epsilon\widehat{u},\quad v=\epsilon\widehat{v},\quad w=\overline{w}+\epsilon\widehat{w},\quad0<\epsilon\ll 1.
\end{equation*}
Substituting those perturbations, linearizing around $\epsilon=0$, and collecting the terms attached to $\epsilon$ gives the following system for $(\widehat{u},\widehat{v},\widehat{w})$
\begin{equation*}
    \left\{
    \begin{aligned}
        \frac{\partial \widehat{u}}{\partial t}&=D_u\Delta \widehat{u} + (\widehat{u},\widehat{v},\widehat{w})\cdot \mathcal{U}  &&\text{ in } \Omega\times(0,T],\\
        \frac{\partial \widehat{v}}{\partial t}&=D_v \Delta \widehat{v}+ (\widehat{u},\widehat{v},\widehat{w})\cdot \mathcal{V} &&\text{ in } \Omega\times(0,T],\\
        \frac{\partial \widehat{w}}{\partial t}&=\nabla \cdot \big[D_w \nabla \widehat{w}-\overline{w} \chi_2(\overline{u}) \nabla \widehat{u} \big]&&\text{ in } \Omega\times(0,T],\\
         \frac{\partial u}{\partial \mathbf{n}}&= \frac{\partial v}{\partial \mathbf{n}}= \frac{\partial w}{\partial \mathbf{n}}=0&&\text{ on }\partial\Omega,
    \end{aligned}
    \right.
\end{equation*}
where
\begin{align*}
\mathcal{U}=\begin{bmatrix}
    -g_1(\overline{u},0,\overline{w})+\frac{d h_1(\overline{u})}{du}-\overline{u}\frac{\partial g_1(\overline{u},0,\overline{w})}{\partial u}\\[0.5em]
     \overline{u} \gamma f(\overline{u},0,\overline{w})-\overline{u}\frac{\partial g_1(\overline{u},0,\overline{w})}{\partial v}\\[0.5em]
    -\overline{u}\frac{\partial g_1(\overline{u},0,\overline{w})}{\partial w}
\end{bmatrix},
\mathcal{V}=\begin{bmatrix}
    0\\[0.5em]
    -\overline{u} f(\overline{u},0,\overline{w})-g_2(\overline{u},0,\overline{w})+h_2(0)\\[0.5em]
    0
\end{bmatrix}.
\end{align*}

Now we set $(\widehat{u},\widehat{v},\widehat{w})=(C_1,C_2,C_3)e^{\sigma t+i k\cdot x}$, where $\sigma$ is the growth rate and $k$ is the wavemode vector. We obtain the following eigenvalue problem

\begin{equation*}
    \sigma (C_1,C_2,C_3)^T=\mathcal{A}(C_1,C_2,C_3)^T,
\end{equation*}

where 
\begin{equation}\label{stabmatrix}
    \mathcal{A}=\begin{bmatrix}
    -|k|^2 D_u +\mathcal{U}_1&\mathcal{U}_2&\mathcal{U}_3\\[0.5em]
    0&-|k|^2D_v+\mathcal{V}_2&0\\[0.5em]
    |k|^2\overline{w}\chi_2(\overline{u})&0&-|k|^2D_w
    \end{bmatrix}.
\end{equation}

The stability of the trivial steady state is determined by the eigenvalues of \eqref{stabmatrix}, which has a row with all zero entries except for one. We obtain the following characteristic equation
\begin{equation*}
    (\sigma+|k|^2D_v-\mathcal{V}_2)\Big((\sigma+|k|^2D_u -\mathcal{U}_1)(\sigma +|k|^2D_w)-|k|^2\overline{w}\chi_2(\overline{u})\mathcal{U}_3\Big)=0.
\end{equation*}

One of the real roots is
\begin{equation*}
    \sigma_1=-|k|^2D_v +\mathcal{V}_2.
\end{equation*}

It is negative for any $k$ such that $|k|>0$, if $\mathcal{V}_2$ is negative
\begin{equation}\label{linstineq1}
\overline{u} f(\overline{u},0,\overline{w})+g_2(\overline{u},0,\overline{w})>h_2(0).
\end{equation}

The negativity of the real part of the other two roots can be checked via the Routh-Hurwitz criterion: all coefficients should be positive
\begin{align*}
    |k|^2(D_u+D_w)-\mathcal{U}_1>0\quad\text{and}\quad |k|^2\big(|k|^2D_uD_w-D_w\mathcal{U}_1-\overline{w}\chi_2(\overline{u})\mathcal{U}_3\big)>0.
\end{align*}

The inequalities are satisfied for any $k$ with $|k|>0$ if
\begin{align*}
    &\mathcal{U}_1<0 \iff    
g_1(\overline{u},0,\overline{w})+\overline{u}\frac{\partial g_1(\overline{u},0,\overline{w})}{\partial u}>
\frac{d h_1(\overline{u})}{du},\\
&\mathcal{U}_1<-\frac{\overline{w}\chi_2(\overline{u})\mathcal{U}_3}{D_w}\iff g_1(\overline{u},0,\overline{w})+\overline{u}\frac{\partial g_1(\overline{u},0,\overline{w})}{\partial u} +\frac{\overline{u}\overline{w}\chi_2(\overline{u})}{D_w}\frac{\partial g_1(\overline{u},0,\overline{w})}{\partial w} >\frac{d h_1(\overline{u})}{du},
\end{align*}

which can be combined into
\begin{align}\label{linstineq2}
g_1(\overline{u},0,\overline{w})+\overline{u}\frac{\partial g_1(\overline{u},0,\overline{w})}{\partial u} +\min\left\{\frac{\overline{u}\overline{w}\chi_2(\overline{u})}{D_w}\frac{\partial g_1(\overline{u},0,\overline{w})}{\partial w},\,0\right\} >\frac{d h_1(\overline{u})}{du}.
\end{align}

With the derived inequalities, we have the following proposition.

\begin{prop}\label{prop1}
    If inequalities \eqref{linstineq1} and \eqref{linstineq2} are satisfied, then the trivial constant steady state $(\overline{u},0,\overline{w})$ of system \eqref{eq:themainsys} is locally asymptotically stable. If at least one of the previous inequalities is false, then the trivial constant steady state is unstable. 
\end{prop}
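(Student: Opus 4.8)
The plan is to derive the nonlinear conclusion from the spectral information already assembled, via the principle of linearized stability. Because the operator in \eqref{eq:themainsys} was shown to be normally parabolic (hence sectorial) in the proof of Theorem \ref{thm:locex}, the linearization $\mathcal{L}$ about $(\overline{u},0,\overline{w})$ generates an analytic semigroup, and local asymptotic stability of the equilibrium is governed by the location of $\operatorname{spec}(\mathcal{L})$: if the spectrum lies in a half-plane $\{\operatorname{Re}\sigma\le-\delta<0\}$ the equilibrium is locally asymptotically stable, whereas a single spectral point with positive real part forces instability. First I would make the plane-wave ansatz rigorous by expanding perturbations in the $L^2(\Omega)$-orthonormal basis of Neumann--Laplacian eigenfunctions $\{\psi_j\}$ with eigenvalues $0=\mu_0<\mu_1\le\mu_2\le\cdots$; since each eigenspace is invariant under $\mathcal{L}$, the spectrum decomposes as $\operatorname{spec}(\mathcal{L})=\bigcup_{j\ge0}\operatorname{spec}\big(\mathcal{A}(\mu_j)\big)$, where $\mathcal{A}(|k|^2)$ is the matrix \eqref{stabmatrix} with $|k|^2$ replaced by $\mu_j$. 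This reduces the entire question to the sign of the real parts of the three eigenvalues of $\mathcal{A}(\mu_j)$ across the discrete set of modes.

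Next I would carry out the mode-by-mode analysis exactly as set up before the statement. The block structure of $\mathcal{A}$ already factors the characteristic polynomial into the linear factor yielding $\sigma_1=-|k|^2 D_v+\mathcal{V}_2$ and the $2\times2$ block in $(\widehat u,\widehat w)$ whose characteristic polynomial is $\sigma^2+a_1\sigma+a_0$ with $a_1=|k|^2(D_u+D_w)-\mathcal{U}_1$ and $a_0=|k|^2\big(|k|^2 D_uD_w-D_w\mathcal{U}_1-\overline{w}\chi_2(\overline{u})\mathcal{U}_3\big)$. For $\sigma_1$, negativity over all modes is equivalent to $\mathcal{V}_2<0$, i.e.\ \eqref{linstineq1}. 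For the quadratic I would invoke the Routh--Hurwitz criterion: both roots have negative real part iff $a_1>0$ and $a_0>0$. Since $a_1$ and the bracket in $a_0$ are increasing in $|k|^2$, their binding constraints occur as $|k|\to0^+$ and reduce respectively to $\mathcal{U}_1<0$ and $\mathcal{U}_1<-\overline{w}\chi_2(\overline{u})\mathcal{U}_3/D_w$, whose conjunction is precisely \eqref{linstineq2} through the $\min\{\cdot,0\}$; the large-$|k|$ regime is automatic because $|k|^2 D_uD_w$ dominates. Together these give $\operatorname{Re}\sigma<0$ for every eigenvalue at every positive mode.

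The step I expect to be the main obstacle is the homogeneous mode $\mu_0=0$. There $a_0$ vanishes identically, so the $2\times2$ block contributes a genuine zero eigenvalue irrespective of the inequalities; this is not an artefact but a consequence of the divergence form of the $w$-equation, which conserves $\int_\Omega w\,dx$ and hence produces a one-parameter family of equilibria indexed by the guardian mass. To handle it I would not seek plain asymptotic stability but restrict $\mathcal{L}$ to the invariant subspace $\{\int_\Omega\widehat w\,dx=0\}$ of mass-preserving perturbations (invariant because $\frac{d}{dt}\int_\Omega\widehat w\,dx=0$ under the Neumann flow). On this subspace the constant $\widehat w$-direction that carries the zero eigenvalue is excluded, and the surviving zero-mode eigenvalues reduce to $\mathcal{V}_2$ and $\mathcal{U}_1$, both negative under \eqref{linstineq1}--\eqref{linstineq2}; the equilibrium is therefore asymptotically stable modulo the conserved mass. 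This is the delicate point, since a naive application of linearized stability would be blocked by the neutral direction.

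Finally, for the converse I would exhibit a destabilizing mode whenever an inequality fails strictly. If \eqref{linstineq1} fails then $\mathcal{V}_2>0$ and the mode $\mu_0=0$ already gives $\sigma_1=\mathcal{V}_2>0$; if the first branch of \eqref{linstineq2} fails then $\mathcal{U}_1>0$ and the $2\times2$ block at $\mu_0=0$ has the root $\mathcal{U}_1>0$; and if only the second branch fails then $a_0<0$ for all sufficiently small positive wavemodes, so $\mathcal{A}(\mu_j)$ carries a real positive eigenvalue for any admissible $\mu_j$ in that range. In each case $\operatorname{spec}(\mathcal{L})$ meets the open right half-plane and the equilibrium is unstable (treating $|k|^2$ as a continuous parameter as in the preceding computation, or noting the domain admits such a low mode), which completes Proposition \ref{prop1}.
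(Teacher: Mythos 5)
Your proposal is correct and, at its core, follows the same route as the paper: decomposition into spatial Fourier/Neumann modes, factorization of the characteristic polynomial of \eqref{stabmatrix} into the $\widehat v$-factor $\sigma_1=-|k|^2D_v+\mathcal{V}_2$ and the $(\widehat u,\widehat w)$ quadratic, and Routh--Hurwitz on the latter, with the binding constraints as $|k|^2\to0^+$ reproducing exactly \eqref{linstineq1} and \eqref{linstineq2}. Where you genuinely add to the paper is at the two places it is silent. First, the paper only asserts negativity of roots ``for any $k$ with $|k|>0$'' and never confronts the homogeneous mode: you correctly identify that at $\mu_0=0$ the $(\widehat u,\widehat w)$ block carries an unavoidable zero eigenvalue forced by conservation of $\int_\Omega w\,dx$, and you resolve it by restricting the linearization to the invariant subspace $\{\int_\Omega \widehat w\,dx=0\}$, concluding asymptotic stability modulo the conserved mass (equivalently, stability of the one-parameter family of equilibria indexed by $\overline{w}$). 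This repairs a real imprecision in Proposition \ref{prop1} as literally stated: unrestricted local asymptotic stability cannot hold, since a perturbation that adds guardian mass relaxes to a \emph{shifted} equilibrium, not the original one. Second, you prove the instability half explicitly by exhibiting an unstable mode in each failure case, whereas the paper simply asserts it after the forward computation. One caveat remains, which affects the paper's argument equally and which your parenthetical at least flags: when only the second branch of \eqref{linstineq2} fails, the positive eigenvalue occupies the band $0<|k|^2<\bigl(D_w\mathcal{U}_1+\overline{w}\chi_2(\overline{u})\mathcal{U}_3\bigr)/(D_uD_w)$, and on a fixed bounded domain the admissible $|k|^2$ are the discrete Neumann eigenvalues $\mu_j$; instability therefore requires some $\mu_j$ to fall in that band, so the converse statement is only valid if $|k|^2$ is treated as a continuous parameter (or the domain is large enough to admit such a low mode). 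Making that hypothesis explicit would be the one remaining improvement to both your argument and the paper's.
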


Inequality \eqref{linstineq1} can hold provided that a removal of partakers is sufficient to overcome their intrinsic growth rate, and fails if the opposite is true. Inequality \eqref{linstineq2} can hold provided that a mitigating effect coming from the guardians is enough to outweigh the growth rate of the abstract value scalar field. In the real world, this means that if a guardian has enough power and freedom (\textit{i.e.}, they have control of $g_1$, $g_2$, and $w_0$), they may drive the system to the trivial steady state, effectively eradicating $v$ (\textit{e.g.}, the partaker density) and bringing $u$ (\textit{e.g.}, target's attractiveness) to the baseline level. Such an implication may help in deriving optimal strategies, in the sense of finding $g_1$, $g_2$, and $w_0$ that are adequate to suppress any potentially offensive activities. One can also note that, in general, $\frac{\partial g_1(\overline{u},0,\overline{w})}{\partial w}>0$, as the effect of the measures is proportional to the number of control agents. Hence, if $\chi_2(\overline{u})>0$ (\textit{e.g.}, on-hotspot movement of $w$), then only inequality \eqref{linstineq1} needs to be satisfied. It follows that, in this framework, off-hotspot (\textit{i.e.}, away from the high concentration regions) movement strategies require a larger effect from the $g_1$ function to maintain the stability of the trivial steady state. 

We do not consider other constant steady solutions, \textit{e.g.}, $\overline{v}>0$, as their existence depends if $\gamma uvf(u,v,w)- ug_1(u,v,w)+h_1(u)$ or $ - vuf(u,v,w)-vg_2(u,v,w)+vh_2(v)$ have non-negative real roots with respect to $u,v,w$. This, in turn, requires either putting additional assumptions or defining the functions explicitly. The procedure for analyzing the stability would be the same. However, the conditions may not be as explicit and/or simple as in Proposition \ref{prop1}, because matrix \ref{stabmatrix} may not have a nice structure for defining a determinant concisely; hence, finding the explicit roots of the corresponding characteristic equation could be more challenging. This is partially due to the complex formula for cubic equations. The issue can be circumvented by using the Routh-Hurwitz criterion instead; however, even this approach may be algebraically challenging in terms of obtaining explicit stability conditions, \textit{cf.} \cite{rodriguez2021understanding}.

\section{Applications and Numerical Simulations}\label{Sec:App}

In this section, we explore some potential applications of our theory, focusing on mathematical sociology. This area is relatively underexplored compared to the natural sciences. Reaction–advection–diffusion equations have been widely used in fields such as chemistry (chemotaxis), biology (morphogenesis), and ecology (species invasion) \cite{murray2011mathematical}. In the social sciences, mathematical modeling exists but is still developing. One reason is the inherent complexity of human behavior, which makes it challenging to choose the most appropriate framework for a given situation.  We study social dynamics using system \eqref{eq:themainsys} and consider two specific scenarios. The first model addresses the propensity for protest during mass public events. The second focuses on classroom bullying. Both are important and widespread social issues, yet formal mathematical models of these phenomena are relatively rare. 

\subsection{Protest propensity}

We consider an opinion-spreading model for protests or social uprisings, focusing on crowded public spaces where social tensions (\textit{i.e.}, state of unease between individuals and/or groups) are relatively high. We investigate how protesters (active or passive) contribute to the overall protesting intensity under certain control measures. Mathematical models of protest activity and social tension have been studied previously \cite{burbeck1978dynamics}, with extensions in \cite{berestycki2020modeling, bonnasse2018epidemiological}. 
These ``epidemiological" models have 
recently been applied to the 2019 Chilean riots \cite{caroca2020anatomy, cartes2022mathematical, cartes2025influence}. In this work, we examine how protest activity is shaped by new participants and different management strategies.

Let $A$ be a scalar field that measures the {\it propensity to protest}. In the sociological literature, the propensity to protest is defined as a person's disposition to think of participating in or to act by participating in protest activities \cite{brandstatter2014personality}.  The corresponding disposition is proportional to the present incentives. In particular, participation occurs when the difference between perceived benefit and cost passes a certain threshold. 
Note that recruiters can play an essential role in changing those perceptions. In a way, $A$ represents the concentration of the most active and vocal protestors, who may also serve as recruiters. 

There is also a ``curious" population $P$ on the streets, who may consider engaging in the protest. This group is attracted to high concentrations of $A$, as more information can be obtained in those spots. One may ask why the non-protesting population may be interested in potentially joining the protesting one. We assume that if people are not staying home and are on the street where the activity is happening, then at least they are somewhat partial. From a sociological perspective, this can be explained by low fear of repression, the absence of violence, and individual empathy \cite{aytacc2018people, wouters2019persuasive}. 
Changes in individual opinions are influenced by shifts in attitude within a respective local community, hence the presence of partiality \cite{xu2023sight}. Participation is proportional to the proximity (on a social network) of non-participating individuals to active movement members \cite{gonzalez2011dynamics}. 
It was also empirically observed that reinforcement from multiple sources is a crucial factor in obtaining the critical mass needed for shifting of protest opinions \cite{gonzalez2011dynamics}. This is akin to the broken windows effect, where repeated actions increase the susceptibility of victims. 

At the same time, there are individuals interested in reducing the propensity, for example, local police, protest managers, or peacekeepers. We denote the density of these agents by $M$. We assume that $M$ tends to move toward regions with high values of $A$, aiming to increase control over the protest. The influence of $M$ on both $P$ and $A$ depends strongly on context. One possible way to reduce propensity is through negotiation: open communication can help prevent violent outbreaks and may lead to peaceful dispersal \cite{nassauer2019}. By contrast, aggressive policing can have unintended effects, such as fueling further community radicalization \cite{ellefsen2021unintended}. In the next two subsections, we present two systems modeling these two different protest management strategies.  
\begin{equation}\label{eq:protest_general}
    \left\{
    \begin{aligned}
        \frac{\partial A}{\partial t}&=D_A\Delta A +\gamma AP\Big(f(A,P,M)-g_1(A,P.M)\Big)+\Phi_A- A, &&\text{ in } \Omega\times(0,T],\\
        \frac{\partial P}{\partial t}&=\nabla \cdot \left[D_P \nabla P-P\frac{\chi_P}{1+A}\nabla A \right]\\
        &+P\Big(- Af(A,P,M)-g_2(A,P,M)+\Phi_P-P\Big),&&\text{ in } \Omega\times(0,T],\\
        \frac{\partial M}{\partial t}&=\nabla \cdot \left[D_M \nabla M-M \frac{\chi_M}{1+A} \nabla A \right],&&\text{ in } \Omega\times(0,T],\\
            \frac{\partial A}{\partial \mathbf{n}}&= \frac{\partial P}{\partial \mathbf{n}}= \frac{\partial M}{\partial \mathbf{n}}=0,&&\text{ on }\partial\Omega,\\
A(\cdot&,0)=A_0\geq0,\,P(\cdot,0)=P_0\geq0,\,M(\cdot,0)=M_0\geq0,&&\text{ in }\Omega\times\{0\}.
    \end{aligned}
    \right.
\end{equation}

Both versions of the model are governed by system \eqref{eq:protest_general}. The diffusion terms represent the random movement of the agents. However, for $A$, the diffusion also accounts for the fact that propensity spreads locally due to the significant role of proximity in protest participation. The advection terms are saturated. Hence, $P$ and $M$ move along $\nabla A$ with rate $\chi_P P/(1+A)$ and $\chi_M M/(1+A)$, respectively. Note that the choice of 1 in the denominator is arbitrary and can be substituted with any positive number/parameter. Whenever a curious population encounters a highly active protest concentration, $AP$, they leave their compartment and, with an additional rate $\gamma=1$ (which remains fixed throughout), join the active protesters, thereby increasing the propensity. Both $A$ and $P$ have logistic growth rates with carrying capacities $\Phi_A$ and $\Phi_P$ respectively. The limit on local population sizes dictates logistic growth. The two models differ in reaction terms containing $M$, specifically in the terms $f,g_1,g_2$ in system \eqref{eq:protest_general}.

\subsubsection{Negotiating management}

In the first version, system \eqref{eq:protest_peace}, we assume that $M$ represents the density of peacekeeping protest management representatives. The only goal of $M$ agents is to reduce protest propensity as a means to avoid any violent outburst. Hence, we add the term $\frac{A P (\psi+M)}{1+e^{-(P-\Psi)}}$, which means that whenever all three scalar fields meet, the attractiveness reduces proportionally to the number of passive protesters and management. At the same time, this reaction term is ``activated" only when the number of protests at the location is sufficiently large ($\sim \Psi$). The parameter $\psi>0$ denotes the baseline management control. Note that $M$ does not target $P$ directly in this scenario, hence the absence of the reaction term between these two.  The model in this situation is given by 
\begin{equation}\label{eq:protest_peace}
    \left\{
    \begin{aligned}
        \frac{\partial A}{\partial t}&=D_A\Delta A +A\left(P-\frac{P (\psi+M)}{1+e^{-(P-\Psi)}}\right)+\Phi_A- A, &&\text{ in } \Omega\times(0,T],\\
        \frac{\partial P}{\partial t}&=\nabla \cdot \left[D_P \nabla P-P\frac{\chi_P}{1+A}\nabla A \right]+P\big(-A+\Phi_P-P\big),&&\text{ in } \Omega\times(0,T],\\
        \frac{\partial M}{\partial t}&=\nabla \cdot \left[D_M \nabla M-M \frac{\chi_M}{1+A} \nabla A \right],&&\text{ in } \Omega\times(0,T].
    \end{aligned}
    \right.
\end{equation}

\subsubsection{Enhanced policing}
In the second version, system \eqref{eq:protest_elevated}, we assume that $M$ represents the density of enhanced policing agents. Prior studies suggest that increases in policing may not always reduce unrest; in some contexts, they have been associated with heightened anger among protesters and stronger mobilization \cite{ellefsen2021unintended}. This escalation can contribute to greater social tensions and, in turn, a higher likelihood of protest activity. For instance, in 2019 in Iraq, police actions coincided with increased protest turnout \cite{curtice2021street}, and in the late 2010s in Uganda, strong repression was followed by greater dissent.  The model in this situation is given by 
\begin{equation}\label{eq:protest_elevated}
    \left\{
    \begin{aligned}
        \frac{\partial A}{\partial t}&=D_A\Delta A + \frac{AP}{1+P(\psi+M)}\\
        &+A\big(\tanh(PM-\Psi)\big)+\Phi_A-A, &&\text{ in } \Omega\times(0,T],\\
        \frac{\partial P}{\partial t}&=\nabla \cdot \left[D_P \nabla P-P\frac{\chi_P}{1+A}\nabla A \right]\\
        &+P\left(-\frac{A}{1+P(\psi+M)}- M+\Phi_P-P\right),&&\text{ in } \Omega\times(0,T],\\
        \frac{\partial M}{\partial t}&=\nabla \cdot \left[D_M \nabla M-M \frac{\chi_M}{1+A} \nabla A \right],&&\text{ in } \Omega\times(0,T].
    \end{aligned}
    \right.
\end{equation}

We assume that policing reduces both $A$ and $P$ through apprehensions and related interventions. This effect is represented in three ways. First, recruitment (the $AP$ term) is limited by the frequency of police encounters, modeled as saturation by $P(\psi+M)$, where $\psi>0$ represents a baseline level of inhibition. Second, enforcement may also deter or remove individuals who are not yet actively protesting, reflecting the broader reach of escalated repression \cite{ellefsen2021unintended}. Third, we include a backlash term, $A(\tanh(PM-\Psi))$, where removal beyond a threshold $\Psi$ increases social tensions, hence, protesting activity. In this way, policing can both suppress participation through deterrence and contribute to increased mobilization through backlash \cite{van2013emotions}. At first glance, system \eqref{eq:protest_elevated} may appear to be an extension of \eqref{eq:protest_peace}; however, there are noticeable differences, which are summarized in Table \ref{tab:summary} and will be more apparent in the following simulations.
 \begin{remark}
     One may note that system \eqref{eq:protest_elevated} does not satisfy (H5). However, $\tanh$ is bounded by 1, which means that the associated reaction term behaves linearly in $A$ and can be seen as a part of the growth rate. Hence, the proof of $W_{1,\infty}$-boundedness of $A$ holds. 
 \end{remark}

\begin{table}[H]
    \centering
\begin{tabular}{ | m{2.5cm} | m{6cm}| m{7cm} | } 
 \hline
 \cellcolor{gray!30} Reaction terms in \eqref{eq:protest_general}& \cellcolor{gray!20}  Negotiating management (\eqref{eq:protest_peace}) & \cellcolor{gray!20} Escalated forces, system \eqref{eq:protest_elevated}  \\ \hline
\cellcolor{gray!15}$ f(A,P,M)$& The expected number of events: $AP$& The saturation factor 
$1+P(\psi+M)$ increases with the number of escalated force actions against recruits, representing the resulting “chilling” effect.  \\ \hline
\cellcolor{gray!15} $g_1(A,P,M)$ & Is ``activated", when $P>\Psi$, where $\Psi$ is a tolerance of control agents towards the presence of newly joining protesters.& No activation occurs, but once $PM > \Psi$ (the demonstrators’ tolerance limit), backlash increases protest activity. The function effectively behaves as a growth rate since $P,M$ appear only inside $\tanh$. \\ \hline
 \cellcolor{gray!15}$g_2(A,P,M)$ & Does not directly reduce the presence of recruiters. & Decreases the rate of new participants by a factor of $-PM$.\\ \hline
\end{tabular}
 \caption{The difference in the reaction terms between systems \eqref{eq:protest_peace} and \eqref{eq:protest_elevated}. The conditions imposed on system \eqref{eq:protest_elevated} are more subtle. This is manifested not only in more involved functions, but also in the difference in tolerance parameter $\Psi$. The distinguishing property in the definition of this parameter will also be highlighted in the following simulations.}
    \label{tab:summary}
\end{table}

\subsubsection{Numerical Results}
In this section, we conduct numerical experiments to analyze the behavior of both systems and compare their solutions for suitable parameter values.

{\it Experimental set-up}. Two-dimensional simulations on square domains are carried out using MATLAB’s PDE Toolbox \cite{MATLAB, PdeToolbox}. To guide parameter selection, we use linear stability analysis of the trivial constant steady state, then choose values that produce non-trivial dynamics. Initial conditions are set so that the density of protesters begins at zero, while the other variables start near their steady-state values with small perturbations. We aim to understand the growth dynamics of the density of protesters, as well as the response of the other two quantities of interest. 

For each system configuration, we present two figures: one showing the evolution of the root mean square amplitude,
$$
f_{Amp}(t) := \sqrt{\tfrac{1}{|\Omega|}\int_\Omega f^2(x,y,t)dxdy},
$$
and another showing the final spatial patterns when relevant. Simulations are run for a longer period than the time windows illustrated in the plots, allowing the figures to capture both the emergence and stabilization of steady states. All three system components are included, with the exact parameters reported in the figure captions. 

{\it Simulations of the negotiated management model}. We start with the simulation of system \eqref{eq:protest_peace}. We want to understand how a change in a single parameter affects the solution behavior. In this case, we focus on tolerance $\Psi$, which is a threshold for when control measures are applied. In the first simulation, we use a relatively large $\Psi$, specifically $\Psi =5$. Figure \ref{fig:negot_amp_hetero} illustrates the emergence of spatially heterogeneous, yet constant-in-time, solutions of system \eqref{eq:protest_peace}. These solutions are sometimes referred to as ``hotspot''. The final pattern is shown in Figure \ref{fig:negot_end_hetero}, where a regular grid of circles is observed, with all three scalars concentrated. These hotspots emerge after some transient fluctuations. 

\begin{figure}[H]
    \centering
    \begin{subfigure}{0.4\textwidth}
    \includegraphics[width=\textwidth]{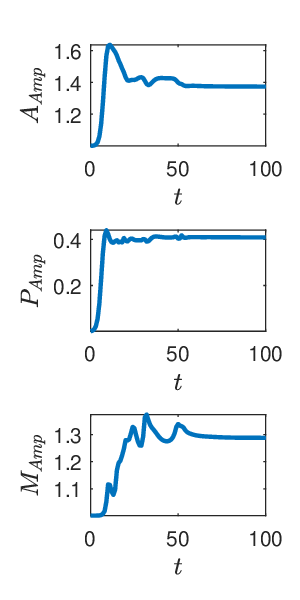}
    \caption{$\Psi=5$.}\label{fig:negot_amp_hetero}
    \end{subfigure}
    ~
    \begin{subfigure}{0.4\textwidth}
        \includegraphics[width=\textwidth]{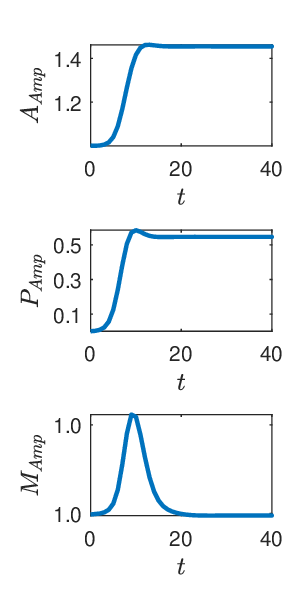}
        \caption{$\Psi=1$.}\label{fig:negot_amp_const}
    \end{subfigure}
    \caption{Root mean square of the solutions to system \eqref{eq:protest_peace}. The two subfigures show how a change in the single parameter $\Psi$ affects the solution behavior. In both \textbf{(a)} and \textbf{(b)}, the amplitudes become constant. Another common feature is that $A_{Amp}$ and $P_{Amp}$ rise and stabilize at a value higher than the initial.  However, the spatial landscape is different: in \textbf{(a)}, it is spatially heterogeneous, while in \textbf{(b)} it is uniform. The simulations are performed on a square domain with a side length of $\pi$ with parameters: $D_A=0.1$, $D_P=0.1$, $D_M=0.1$, $\chi_P=2$, $\chi_M=1$, $\Phi_A=1$, $\Phi_P=2$, $\psi=0.1$. $\Psi$ differs between the two subfigures. The initial conditions are chosen as $[A_0,P_0,M_0]+\varepsilon_0e^{-x-y}$, where $A_0=1.0$, $P_0=0.0$, $M_0=1.0$, and $\varepsilon_0=0.01$.  }
    \label{fig:negot_amp}
\end{figure}

One may ask if it is possible to suppress those hotspots, changing only the management-associated parameters. The answer is yes and no. When we apply Proposition \ref{prop1}, the trivial steady state is stable if $\Phi_A>\Phi_P$ ( $\overline{A}=\Psi_A$ in this case, as well). Hence, if $\Psi_A<\Psi_P$, no change in $\Psi$ would drive $P$ to 0, as the condition for inequality \eqref{linstineq1} is independent of $\Psi$. At the same time, there exists a different constant steady solution, as depicted in Figure \ref{fig:negot_amp_const}. This solution is achieved by decreasing $\Psi$, we take $\Psi=1$ in the figure. An interesting feature is that the overall trend in the evolution of amplitudes in both cases of $\Psi=5$ and $\Psi=1$ appears to be similar, at least for the $A$ and $A$ components, as shown in Figure \ref{fig:negot_amp}. However, in the latter case, the overall amplitude of the management agents returns to the initial constant solution, following an insignificant onset bump of $<0.001$. Moreover, one can see that in the hotspot case, the amplitudes of $A$ and $P$ are overall lower. This poses a dichotomy between control strategies: persistent high-value local hotspots or a uniform landscape with a larger amplitude. 

\begin{figure}[H]
    \centering
    \begin{subfigure}{1\textwidth}
    \includegraphics[width=\textwidth]{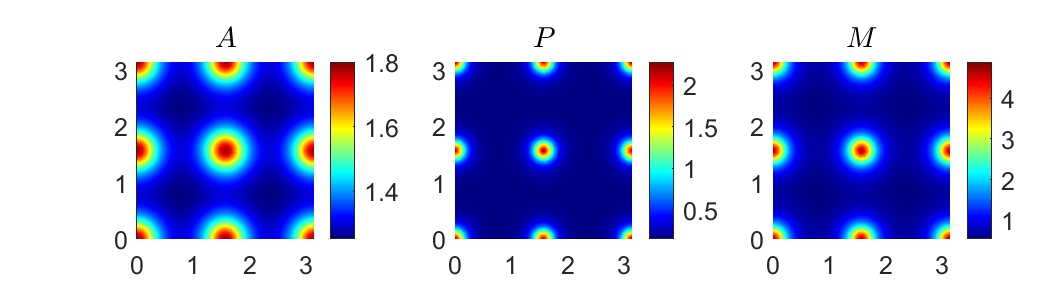}
    \caption{$\Psi=5$, $t=100$.}\label{fig:negot_end_hetero}
    \end{subfigure}
    \\
    \begin{subfigure}{1\textwidth}
        \includegraphics[width=\textwidth]{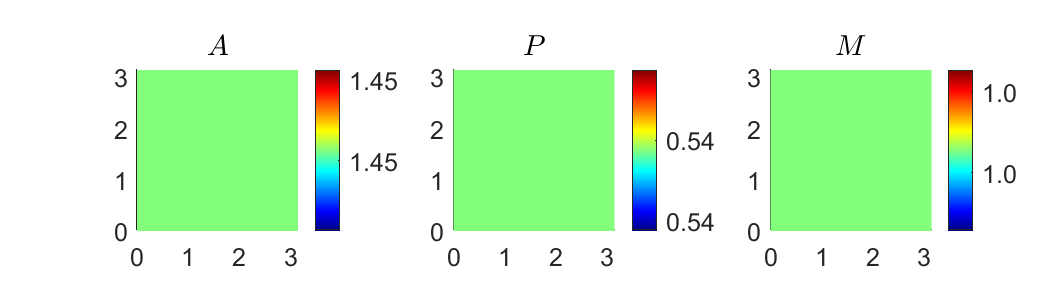}
        \caption{$\Psi=1$, $t=40$.}\label{fig:negot_end_const}
    \end{subfigure}
    \caption{The final time frames of the solutions to system \eqref{eq:protest_peace}. \textbf{(a)} Shows a grid-like pattern of circular clusters at the end of the simulation in Figure \ref{fig:negot_amp_hetero}. \textbf{(b)} Shows the corresponding simulation in Figure \ref{fig:negot_amp_const}, where decreasing $\Psi$ leads to a homogeneous landscape. The set-up and parameters are the same as in Figure \ref{fig:negot_amp}. } 
    \label{fig:negot_end}
\end{figure}

{\it Simulations of the enhanced policing model}. We now simulate solutions to system \eqref{eq:protest_elevated}, where $\Psi$ represents the tolerance of demonstrators toward police actions. Note that $\Psi$ has a different interpretation here compared to the previous section. Our goal is to understand how the decrease in $\Psi$ affects the dynamics of the system. The results are given in Figures \ref{fig:enhan_amp} and \ref{fig:enhan_end}. 

For large values of $\Psi$, the amplitudes stabilize and the landscape becomes homogeneous (Figure \ref{fig:enhan_amp_const}). As $\Psi$ decreases, the behavior becomes less predictable and appears periodic (Figure \ref{fig:enhan_amp_per}). At first glance, $A$ and $P$ seem to reach spatially heterogeneous but time-independent steady states. However, the amplitude field $M_{\text{Amp}}$ reveals small, rapid oscillations, most noticeable in the management component. Figure \ref{fig:enhan_end_end} shows that the components are not fully synchronized, though their ranges remain similar. The resulting pattern resembles a grid, with high-value regions shrinking progressively from $A$ to $M$. Management concentrates on protest hotspots, locally suppressing $A$ and $P$ and pushing them toward regions with weaker management presence. Consequently, large-scale amplitudes remain roughly constant (as in Figure \ref{fig:enhan_amp_per}), since the total mass does not change significantly. Instead, the dynamics manifest as continual cluster shifts. Additional solution frames in Figure \ref{fig:enhan_end_close} illustrate these displacements and make the time delay between the three components more apparent.

\begin{figure}[H]
    \centering
    \begin{subfigure}{0.45\textwidth}
    \includegraphics[width=\textwidth]{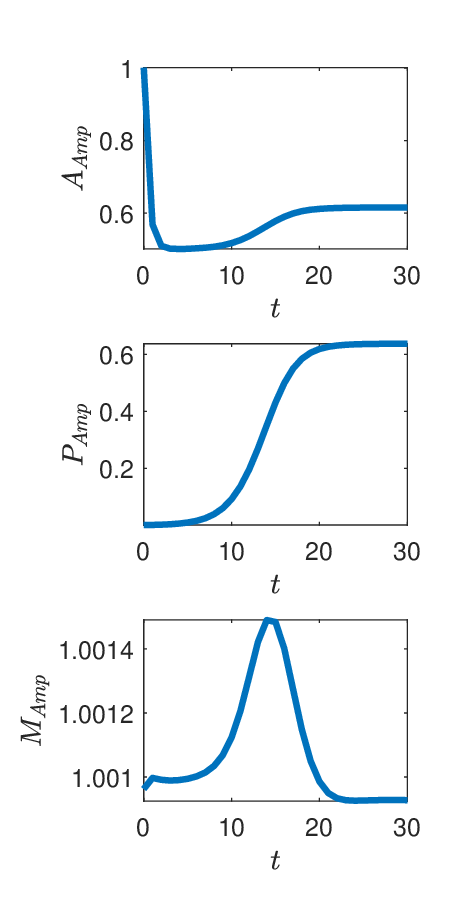}
    \caption{$\Psi=5$.}\label{fig:enhan_amp_const}
    \end{subfigure}
    ~
    \begin{subfigure}{0.45\textwidth}
        \includegraphics[width=\textwidth]{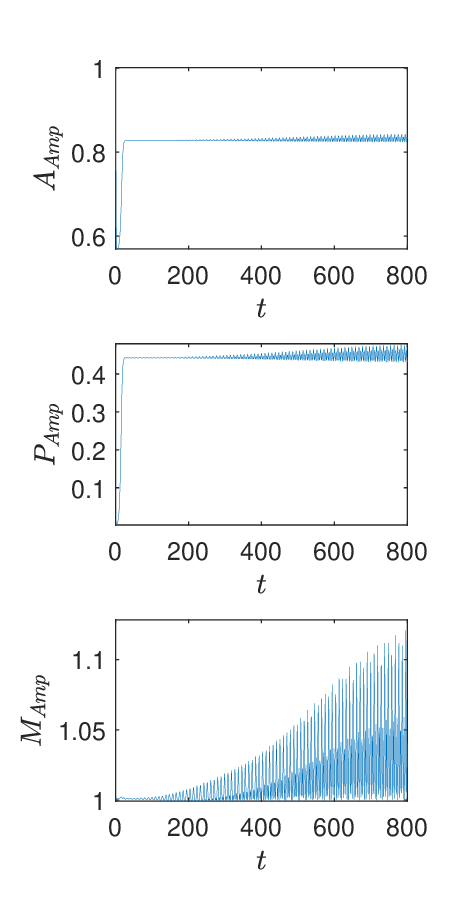}
        \caption{$\Psi=1$.}\label{fig:enhan_amp_per}
    \end{subfigure}
    \caption{Root mean square of the solutions to system \eqref{eq:protest_elevated}.  The two subfigures illustrate the effect of decreasing $\Psi$ on the system’s dynamics. In \textbf{(a)}, a higher $\Psi$ leads the system to a constant steady state, while in \textbf{(b)}, lowering $\Psi$ results in high-frequency periodic oscillations, demonstrating a transition from predictable to more complex behavior. 
The simulations are performed in the same setting as in Figure \ref{fig:negot_amp}. }
    \label{fig:enhan_amp}
\end{figure}
{\it Comparison of the models}. We compare the effect of the change of parameter $\Psi$ between two systems \eqref{eq:protest_peace} and \eqref{eq:protest_elevated}. Figure \ref{fig:enhan_amp} is a counterpart of Figure \ref{fig:negot_amp}, in the sense that the parameters between corresponding settings are the same.
Note that the behaviors are different; we achieve a non-trivial constant steady solution in Figure \ref{fig:enhan_amp_const} and a periodic solution in Figure \ref{fig:enhan_amp_per}, when compared to spatially heterogeneous and constant solutions in Figures \ref{fig:negot_amp_hetero} and \ref{fig:negot_amp_const}, respectively. Hence, a decrease in $\Psi$, which is a proxy for the ``tolerance" in both systems, leads to different behavior: in system \eqref{eq:protest_peace}, it leads to homogenization, whereas in system \eqref{eq:protest_elevated}, it leads to unpredictability. This highlights the importance of understanding the context, how management contributes to reducing protest propensity, and how participants react, to model the dynamics properly.
\begin{figure}[H]
    \centering
    \begin{subfigure}{1\textwidth}
    \includegraphics[width=\textwidth]{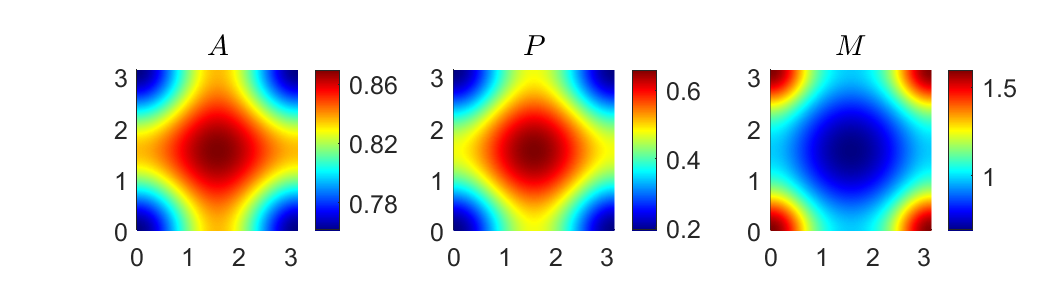}
    \caption{$t=1000$.}\label{fig:enhan_end_end}
    \end{subfigure}
    \\
    \begin{subfigure}{1\textwidth}
        \includegraphics[width=\textwidth]{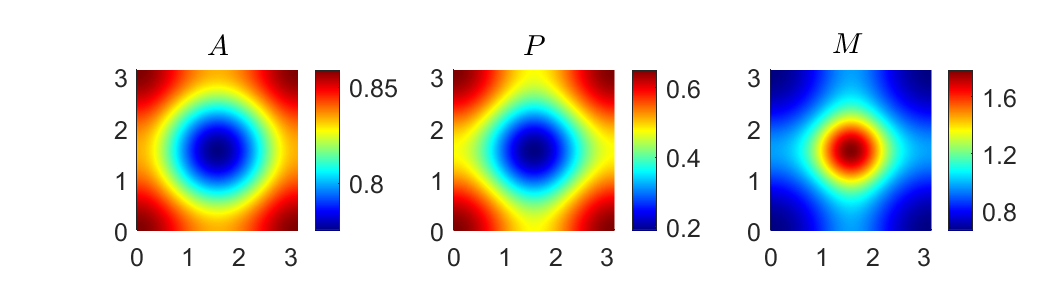}
        \caption{$t=995$.}\label{fig:enhan_end_close}
    \end{subfigure}
    \caption{Final frames of the solutions to system \eqref{eq:protest_elevated}. \textbf{(a)} Symmetric pattern formation where $A$ and $P$ are concentrated at the center and $M$ at the corners.
\textbf{(b)} A similar pattern with the clustering reversed. All components oscillate at the same frequency, but $M$ lags behind $A$ in its motion.
The setup and parameters are the same as in Figure \ref{fig:enhan_amp_per}.}
    \label{fig:enhan_end}
\end{figure}

{\it Key takeaways.} Our simulations indicate that prior knowledge of the reaction terms, determined by participants' actions and receptions of a situation, is crucial in modeling. This significance is corroborated by the observation that changes in these attitudes, as reflected in a decrease in the corresponding tolerance parameter, lead to entirely different solution behaviors. First, we see that reducing management's attitude towards protesters' actions, although it achieves homogenization, does not imply that overall protest activity is reduced. Second, we see that if the active protesters' tolerance for control force actions is reduced, it would lead to unpredictable behavior, which may be undesirable from the management perspective. In either case, and realistically, the tolerance parameters are dictated by the prior relationship between the community and management (be it administration, city council, \textit{etc}), hence our model simulations highlight the importance of maintaining proper communications between these two groups to avoid unpredictable and sometimes extreme behaviors.

\subsection{School bullying}
In this section, we model school bullying. The domain may range from a classroom to the whole school grounds.  There are a few existing models, mostly compartmental, that describe the dynamics in such situations, \textit{cf}, \cite{crokidakis2025mathematical, de2018modelling}. In those works, populations are divided into groups: bullies, victims, defenders, \textit{etc}. In this paper, we propose an alternative approach: rather than examining the sheer number of parties involved, we focus on the intensity of victimization. The reason for studying such an abstract notion is that negative social experiences are linked to a possible increase in aggressive tendencies \cite{reiter2018aggressive,sommer2014bullying}. A recent meta-analysis shows a partial correlation between the mental health of students and school violence \cite{polanin2021meta}. At the same time, the number of school shootings is steadily increasing each year \cite{rapa2024school}. 

Let $V$ be a scalar field representing the intensity of bullying, which can be seen as a proxy for identifying potential victims from the bully’s perspective. Studies show that bullying forms hotspots and can spread from one location to another, even within a single room \cite{migliaccio2017mapping}. Bullying often functions as a group process: nearby bystanders may join aggressors or themselves can become targets, depending on social networks and attitudes \cite{rambaran2020bullying}. This dynamic leads to the spread of harassment. It also reflects broader patterns of marginalization and social stratification \cite{peguero2012schools}. Moreover, bullying is an example of repeat and near-repeat victimization, much like residential burglaries \cite{farrell2014repeat}. Such repeated actions increase anxiety, isolation, and mental health problems, making individuals more vulnerable \cite{arseneault2018annual}. Together, these features suggest that $V$ follows dynamics similar to $A$ in the urban crime model \eqref{eq:E}.

The densities of bullies and guardians (which are usually called defenders) are denoted by the scalar fields $B$ and $G$, respectively. While bullies can be fellow students, the guardians can be teachers or vigilant peers. Both agents tend to go to places where potential victimization is highest. We assume that bullies leave the scene after the adverse action takes place or if a guardian is present. Guardians provide partial support, decreasing victimization. We note that bullying is a complicated (and perhaps, impossible) problem to fully eradicate, as it requires a complex approach \cite{salmivalli2021bullying}. We do not include other roles such as accomplices and witnesses. With the notion introduced previously, we have the following system
\begin{equation}\label{eq:bully}
    \left\{
    \begin{aligned}
        \frac{\partial V}{\partial t}&=D_V\Delta V +V\left(B(\Psi - B)-G\right)+\Phi_V-V, &&\text{ in } \Omega\times(0,T],\\
        \frac{\partial B}{\partial t}&=\nabla \cdot \left[D_B \nabla B-B\frac{\chi_B}{1+V}\nabla V \right]- BV\\ 
        &+B\big(-G(1+\tanh(V))+ \Phi_B-B \big), &&\text{ in } \Omega\times(0,T],\\
        \frac{\partial G}{\partial t}&=\nabla \cdot \left[D_G \nabla G-G \frac{\chi_G}{1+V} \nabla V \right],&&\text{ in } \Omega\times(0,T].
    \end{aligned}
    \right.
\end{equation}

System \eqref{eq:bully} presents one of the possible versions of the modeling. While some of the terms are self-explanatory and have been justified in the previous paragraphs, there are a few distinct features. First, note that the reaction between $V$ and $B$ is of the logistic type with respect to $B$. This can be explained by the fact that if there are too many bullies, potential targets may avoid common areas and even consider switching schools \cite{randa2019measuring}. In addition, victims may consider carrying a weapon, hence potentially reducing their (perceived) exposure to harassment \cite{esselmont2014carrying,van2014bullying}. In either of the cases, the number of victims or their vulnerability may be reduced if the school feels less safe because too many bullies are present. While the presence of a guardian may reduce victimization uniformly, we assume that bullies are removed by guardians only when the current level of victimization is sufficiently high. This is because students report that teachers may minimize or normalize bullying; therefore, there is a correlation between the perceived seriousness of bullying and the likelihood of intervention \cite{midgett2018rethinking}. Such dependence is represented through the already familiar $\tanh$ function that is shifted to preserve sign. Finally, to contrast with systems \eqref{eq:protest_peace} and \eqref{eq:protest_elevated}, we assume that there is no saturation in any of the terms. In the numerical simulation subsection, we will investigate whether the variation in terms (\textit{e.g.}, saturated or not) leads to different dynamics. 

\subsubsection{Numerical Results}

We return to the question of what can be done to change the solution behavior. The linear stability analysis highlights the importance of the initial distribution of the  $G_0$, which also corresponds to $\overline{G}$. In particular, there is an inverse relationship between $\overline{G}$ and $\overline{V}$ of the trivial steady state. We run numerical simulations to investigate the effect of changes in $\overline{G}$ on the system's overall behavior near the trivial steady state. We follow the same steps in the numerical analysis as we did for the protest propensity models.

\begin{figure}[H]
    \centering
    \begin{subfigure}{0.3\textwidth}
    \includegraphics[width=\textwidth]{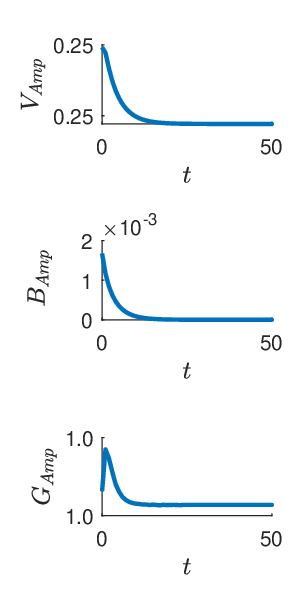}
    \caption{$V_0=0.25,\,G_0=1.0$.}\label{fig:bully_amp_const_1}
    \end{subfigure}
    ~
    \begin{subfigure}{0.3\textwidth}
        \includegraphics[width=\textwidth]{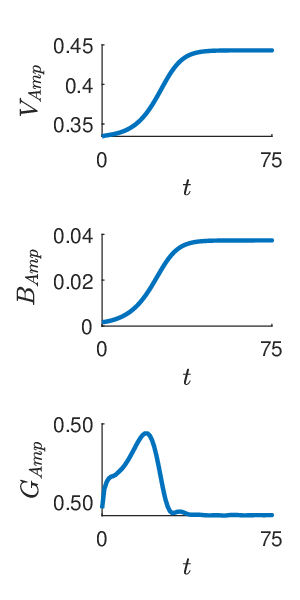}
\caption{$V_0=0.333,\,G_0=0.5$.}\label{fig:bully_amp_const_2}

    \end{subfigure}
        ~
    \begin{subfigure}{0.3\textwidth}
        \includegraphics[width=\textwidth]{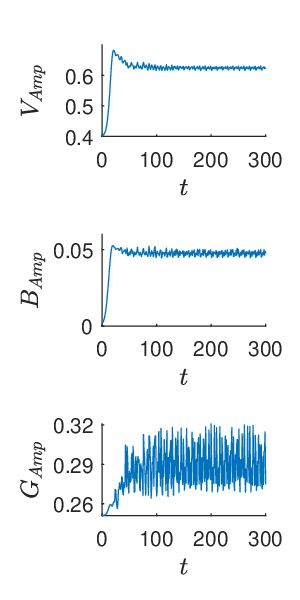}
\caption{$V_0=0.4,\,G_0=0.25$.}\label{fig:bully_amp_per_1}

    \end{subfigure}
    \caption{Root mean square of the solutions to system \eqref{eq:protest_elevated}. The three subfigures illustrate how changes in the initial density of the guardian agents affect solution behavior, particularly the bullies' steady-state density. Consecutive decrease in $G_0$ and corresponding increase in $V_0$ changes the steady state solution behavior from \textbf{(a)} trivial to \textbf{(b)} non-trivial constant to \textbf{(c)} periodic. $B_0=0$ in every settings, yet average $B$ increases from \textbf{(a)}, where it is zero to \textbf{(c)}, where it fluctuates around $0.05$. The domain of the simulation is a square with a side length of $\pi$. The parameters: $D_V=0.05$, $D_B=0.05$, $D_G=0.05$, $\chi_B=2$, $\chi_G=2$, $\Phi_G=0.5$, $\Phi_B=1$, $\Psi=10$. The initial conditions are chosen as $[V_0,B_0,G_0]+\varepsilon_0e^{-x-y}$, where $B_0=0.0$,  $\varepsilon_0=0.01$, and $V_0, G_0$ varies between subfigures.}
    \label{fig:bully_amp}
\end{figure}

Figure \ref{fig:bully_amp} illustrates the effect of decreasing $\overline{G}$. As $\overline{u}$ of the trivial steady state is dependent on $\overline{G}$, we also adjust it. The remaining parameters are the same between the three cases. Between Figures \ref{fig:bully_amp_const_1} and \ref{fig:bully_amp_const_2}, we halve the number of guardians $1.0\to0.5$. This causes the trivial steady state to become unstable, and a new constant steady state solution emerges. We omit the final frames, as no interesting pattern has formed. 
Further halving of $\overline{G}$, $0.5\to0.25$, between Figures \ref{fig:bully_amp_const_2} and \ref{fig:bully_amp_per_1}, leads to the emergence of the periodic behavior and the disappearance of a constant steady-state solution. The amplitudes of the victimization and the bullies' density are also larger than before.  Another important detail is that the transients pass more quickly with an increase in $\overline{G}$. The evolution of amplitude stabilizes at around $t=25$, $=50$, and $=150$ in Figure \ref{fig:bully_amp}, respectively. This may be important when considering time length limitations in achieving the needed values and the landscape of victimization. Periodic behavior generates an interesting pattern, which is commented on in the following paragraph.

\begin{figure}[H]
    \centering
    \begin{subfigure}{1\textwidth}
    \includegraphics[width=\textwidth]{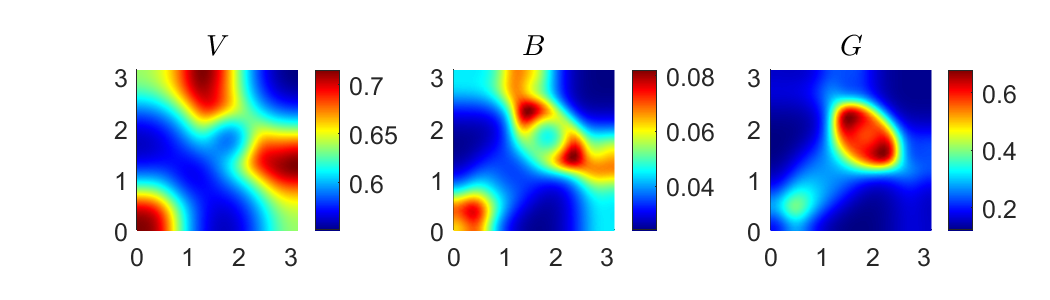}
    \caption{$t=499$.}\label{fig:bully_end_close}
    \end{subfigure}
    \\
    \begin{subfigure}{1\textwidth}
        \includegraphics[width=\textwidth]{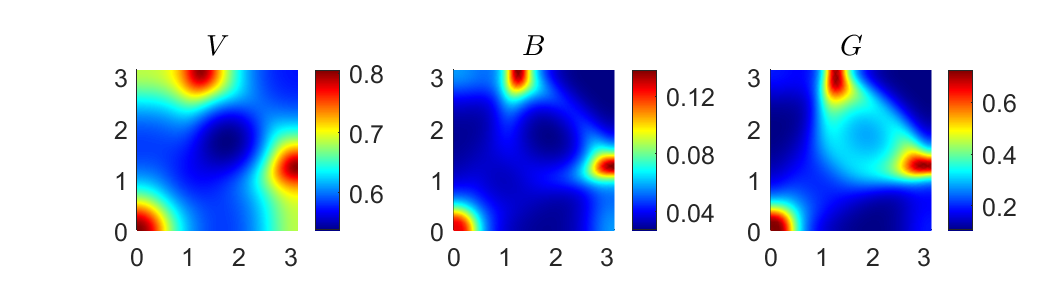}
        \caption{$t=500$.}\label{fig:bully_end_end}
    \end{subfigure}
    \caption{The final time frames of the solutions to system \eqref{eq:protest_elevated}.
From \textbf{(a)} to \textbf{(b)}, clusters of $B$ and $V$ move toward the boundaries, seemingly pushed by $G$. The difference between the two frames is only one time unit, yet $G$ moves quickly enough to localize at the new concentrations of $B$ and $V$. Despite this rapid and precise movement, it is not sufficient to reduce the densities of victimization and bullying to zero, as shown in Figure \ref{fig:bully_amp_per_1}. The setup and parameters are the same as in Figure \ref{fig:bully_amp_per_1}.
}
    \label{fig:bully_end}
\end{figure}

Periodic behavior can be observed in Figure \ref{fig:bully_amp_per_1}. The initial phases look aperiodic, but on the horizon, the solutions seem to stabilize. The changes are equally spaced in time, but not necessarily of the same amplitude. As before, the third panel of this figure provides more evidence of this behavior. The overall amplitude of bully density is on relatively minor scales. In Figure  \ref{fig:bully_end_end}, the second panel shows the maximum value is $0.05$, which is not far from the initial $0.01$. The maximum value of the guardians appears in the exact location as the maximum value of victimization and bullies' density: the components' clusters follow each other. The dynamics appear to be similar to those of \eqref{eq:protest_elevated}. It may seem that scalar fields are synchronized, but the second-to-last solution frames, in Figure \ref{fig:bully_end_close}, show that there is a delay in the evolution of concentrations. Plainly, high values of $V$ are ``followed" by high values of $B$, and high values of $B$ are ``followed" by high values of $G$. $G$ in turn, eradicates $V$ and $G$ locally pushing them to other regions. The periodicity of the amplitudes implies a cycle of such high-value concentration movements.

{\it Key takeaways.} The simulation of the proposed model for bullying highlighted the importance of having a sufficient initial number of guardians or partial people in general to homogenize victimization and reduce bullying density. On the other hand, if the number of guardians is inadequate, the system exhibits unpredictable behavior, making it more challenging to manage harassment, given limited school resources. That is not to say that a sufficient number of guardians solves the bullying problem entirely, or that there is no solution to this issue in a limited-resources setup. Instead, this observation highlights the importance of exploring other avenues (\textit{e.g.}, changes in other parameters) and designing complex approaches that, for example, include introducing temporary damping forces, which in the real world can be manifested through occasional events that raise awareness. 

\subsection{Additional comments on the models}
As a final note, the observations of the importance of $\overline{w}$ made above are true irrespective of the system due to the universality of Proposition \ref{prop1}. In the settings we considered, inequality \eqref{linstineq2} is satisfied, so the linear stability is dependent on inequality \eqref{linstineq1}. There, the left-hand side should be large enough to outweigh $\Phi_B$. On the other hand, a root to $-sg_1(s,0,\overline{w})+h_1(s)$ is inversely proportional to $\overline{w}$. This highlights the significance of the initial mass of guardian agents: not only may it be a factor in determining whether the trivial steady state can be achieved, but it also governs the value of $\overline{u}$ at that state. This is somewhat intuitive, but in this paper, we can have empirical confirmation. 

Although implications from the theory and simulations may seem helpful in informing policies, one must stay critical in interpreting the results. Systems \eqref{eq:protest_peace} and \eqref{eq:protest_elevated} may require further refinement to make them more realistic. For example, in the urban crime model, the advection is of the form $v\chi_1\nabla\ln(u)$, which is not bounded or, in some versions of predator-prey models, the reactions are not saturated \cite{short2008statistical}. A further step would be to incorporate spatial heterogeneity or time dependence into any of the terms. For instance, make parameters fluctuate to represent day and night protest activities, or overlay a graph that represents special means of communication or transportation (\textit{cf}, a subway network integration in the 2019 Chilean riots study \cite{cartes2022mathematical}). 
Another extension would be the introduction of randomness and stochasticity, especially in the reaction terms between target and actor agents. 

The same applies to system \eqref{eq:bully}, where we have omitted factors such as students who are both bullies and victims. Additionally, teachers (who serve as guardians) may also contribute to $V$ positively, as they can be either bullies or victims themselves \cite{twemlow2006teachers}. However, note that any changes from \eqref{eq:themainsys} would make the analysis more convoluted, and any deviation from hypotheses would mean that Theorems \ref{thm:locex} and \ref{thm:globex} may not apply. We still hope that our models can serve as a starting point for all these extensions.

\subsection{Other possible scenarios}
We have covered two settings in detail, and even within these two, some variations exist that make models more specific. For example, one can consider a cyber-attack by viewing a website as a domain, and locality is defined through the networks, such as mutual friends and followers. The malicious acts could be bullying, fake news spreading, \textit{etc.} The underlying feature is the presence of repeat victimization or a similar phenomenon \cite{moneva2022repeat}. There are multiple reasons why a user's vulnerability increases with the frequency of attacks, including the illusory truth effect, the echo chamber effect, and polarization and partisanship \cite{cinelli2021echo, pennycook2021psychology,tandoc2019facts}. Similar concepts (akin to repeat and near-repeat victimization and broken windows effect) can be found in offline scenarios such as scamming \cite{fischer2013individuals, hanoch2021scams}, proselytizing \cite{bainbridge1979cult}, and rumoring \cite{berinsky2023political, zhu2024rumours}.  
No matter the setting, they all share the same framework: the existence of an abstract scalar field and two agents who increase or decrease the abstract scalar field, respectively. Instead of continuing with an extensive list of potential applications, we proceed with numerical simulations of systems \eqref{eq:protest_peace}, \eqref{eq:protest_elevated}, and \eqref{eq:bully} to demonstrate the behavior of the solution. 

\section{Conclusion}

In this paper, we have proposed a general model that has the potential to be widely applied in mathematical sociology. Motivated by the scope of theoretical and practical findings of classical predator-prey and relatively new urban crime models, our RAD system is based on routine activity theory, \textit{i.e.}, we model the dynamics between target, partaker, and guardian. Our model is sufficiently general to accommodate various types of relationships, \textit{e.g.}, Holling type II. Under certain assumptions, we proved the local existence and relevant regularities, and subsequently, global existence ensued. Our proofs are based on results from Amman's parabolic and operator semigroup theories. We performed the linear stability analysis of the trivial steady state. We then demonstrated the potential applications of our model by defining specific functions and basing them on corresponding literature. 

We applied our system to model the dynamics of protest and school bullying. We attempted to capture the effects of the partaker and controller on their respective abstract fields through nonlinear functions. Within the protest scenario, we considered two cases of how management can behave: more peaceful negotiators and an enhanced police force. When we numerically simulated the models, we observed that the systems may exhibit various types of behavior ranging from trivial to chaotic. We explored what would happen to the solutions if we modified the parameters that are linked to the control units. We see that a sufficient change in one parameter may significantly alter the behavior. The simulation results appear to agree with linear stability analysis and the general intuition behind the real-world phenomena. 

The primary goal of this paper is to introduce the model to both mathematical and sociological communities; hence, there are many directions to follow. First, one can further investigate the dynamical properties of the general system: pattern formation, bifurcating branches, \textit{etc}, \textit{cf.} \cite{rodriguez2021understanding,yerlanov2026}.  Second, both application models need additional confirmation and refinement. 
Once these processes are complete, one can address practical questions, such as exploring optimal harm reduction, stabilizing the system, and hotspot suppression, among others \cite{yerlanov2025}. Finally, after tuning to a specific setting, the data fitting can be performed with the hopes of better understanding and predicting community patterns. 

This expository paper should be viewed as a part of a bigger effort to expand the domains of mathematical applications and motivate new interdisciplinary collaborations to tackle complex, critical issues. Our models are insufficient to address the social problems on their own. Instead, these models should be incorporated into bigger research designs with a multiscale approach. We hope that readers will feel inspired to contribute to the field of mathematical sociology.

{\bf Acknowledgement:}  This work was partially funded by NSF-DMS-2042413 and AFOSR MURI FA9550-22-1-0380.

\bibliographystyle{plain} 
\bibliography{ref}

\newpage
\appendix
\section{Appendix}
We recap some key results from semigroup theory \cite{horstmann2005boundedness}.
Let $\Lambda$ denote the sectorial operator, \textit{i.e.}, $\Lambda u=-\Delta u$ for $u\in D(\Lambda):=\{f\in W^{2,p}(\Omega)|\partial f/\partial n=0\}$ with $p\in(1,\infty)$ fixed. Then we have the following results from semigroup theory
\begin{lemma}\label{thm:semigroup}

\begin{enumerate}
    \item Let $m\in\{0,1\}$, $q\in[1,\infty]$, and $p\in(1,\infty)$. Then there exists $c_1>0$ such that
    \begin{equation*}
        \norm{u}_{W^{m,q}(\Omega)}\leq c_1 \norm{(\Lambda+1)^\theta u}_{L^p(\Omega)}
    \end{equation*}
    for any $u\in D((\Lambda+1)^\theta)$, and $\theta\in(0,1)$, such that
    \begin{equation*}
        m-\frac{n}{q}<2\theta -\frac{n}{p}.
    \end{equation*}
    \item   Let $p\geq q$. Then there exists $c_2>0$ and $\nu>0$ such that for any $u\in L^q(\Omega)$,
    \begin{equation*}
           \norm{(\Lambda+1)^\theta e^{-t(\Lambda+1)}u}_{L^{p}(\Omega)}\leq c_2t^{-\theta-\frac{n}{2}(\frac{1}{q}-\frac{1}{p})}e^{-\nu t}\norm{u}_{L^q(\Omega)},
    \end{equation*}
    where $\{e^{-t(\Lambda+1)}\}_{t\geq 0}$ maps $L^q(\Omega)$ into $D((\Lambda+1)^\theta)$.
    \item For any $p\in(1,\infty)$ and $\varepsilon>0$, there exists $c_3>0$ and $\mu>0$ such that for any $u\in L^p(\Omega)$,
    \begin{equation*}
           \norm{(\Lambda+1)^\theta e^{-t\Lambda} \nabla \cdot  u }_{L^{p}(\Omega)}\leq c_3t^{-\theta-\frac{1}{2}-\varepsilon}e^{-\mu t}\norm{u}_{L^p(\Omega)}.
    \end{equation*}
\end{enumerate}
\end{lemma}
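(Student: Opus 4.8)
The plan is to treat all three inequalities as standard consequences of the theory of analytic semigroups generated by sectorial operators, since $\Lambda+1=-\Delta+1$ (with homogeneous Neumann boundary conditions) is precisely such an operator on $L^p(\Omega)$. First I would verify sectoriality: the Neumann Laplacian $-\Delta$ is a nonnegative self-adjoint operator on $L^2$ whose resolvent bounds extend to every $L^p(\Omega)$, $p\in(1,\infty)$, on a bounded domain with sufficiently regular boundary. Shifting by $+1$ moves the spectrum into $[1,\infty)$, so $\Lambda+1$ is boundedly invertible, its spectrum is bounded away from $0$, and it generates an analytic semigroup $\{e^{-t(\Lambda+1)}\}_{t\ge0}$ with exponential decay $\norm{e^{-t(\Lambda+1)}}_{L^p\to L^p}\le Ce^{-\nu t}$ for some $\nu>0$. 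This positivity of the spectral bound is exactly what produces the decaying exponentials in the estimates, and it is the reason the shift by $1$ is introduced. With this in hand the fractional powers $(\Lambda+1)^\theta$ and their negative powers are well defined for $\theta\in(0,1)$ via the standard functional calculus.

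For part 1 the strategy is to identify the domain $D((\Lambda+1)^\theta)$, equipped with the graph norm $\norm{(\Lambda+1)^\theta\cdot}_{L^p}$, with the Bessel potential (fractional Sobolev) space $H^{2\theta,p}(\Omega)$ (incorporating the Neumann condition when $2\theta$ is large enough). Once this identification is available, the claimed inequality is just the continuous Sobolev embedding $H^{2\theta,p}(\Omega)\hookrightarrow W^{m,q}(\Omega)$, which holds precisely under the stated scaling condition $m-\tfrac{n}{q}<2\theta-\tfrac{n}{p}$; the constant $c_1$ is the embedding constant.

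For parts 2 and 3 the plan is to combine the analytic-semigroup smoothing of fractional powers with the ultracontractive $L^q$–$L^p$ bound coming from Gaussian heat-kernel estimates. For part 2, since $\Lambda+1$ has spectrum in $[1,\infty)$, one has both $\norm{(\Lambda+1)^\theta e^{-t(\Lambda+1)}}_{L^p\to L^p}\le Ct^{-\theta}e^{-\nu t}$ and $\norm{e^{-t(\Lambda+1)}}_{L^q\to L^p}\le Ct^{-\frac n2(1/q-1/p)}e^{-\nu t}$; splitting the time interval in half and composing the two factors yields the exponent $-\theta-\frac n2(1/q-1/p)$ together with the decay $e^{-\nu t}$. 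For part 3 I would factor the divergence as
\[
(\Lambda+1)^\theta e^{-t\Lambda}\nabla\cdot=(\Lambda+1)^{\theta+\frac12+\varepsilon}e^{-t\Lambda}\circ\big[(\Lambda+1)^{-\frac12-\varepsilon}\nabla\cdot\big],
\]
using that the Riesz-transform-type operator $(\Lambda+1)^{-1/2}\nabla\cdot$ is bounded on $L^p(\Omega)$, so that the surplus $\varepsilon$ absorbs the endpoint loss. Here the decaying factor $e^{-\mu t}$ is furnished not by a shift but by the spectral gap of the Neumann Laplacian: the range of $\nabla\cdot$ lies in the mean-zero subspace, on which $\Lambda$ is bounded below by the first nonzero eigenvalue $\lambda_1>0$, giving $\norm{(\Lambda+1)^{\theta+\frac12+\varepsilon}e^{-t\Lambda}}\le Ct^{-\theta-\frac12-\varepsilon}e^{-\mu t}$ for any $\mu<\lambda_1$.

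The hard part is not the assembly of these estimates — that is routine interpolation and semigroup algebra — but the two underlying $L^p$-theoretic facts: the concrete identification of $D((\Lambda+1)^\theta)$ with a Bessel potential space in part 1, and the $L^p$-boundedness of the Riesz-type operator $(\Lambda+1)^{-1/2}\nabla\cdot$ in part 3. Both go beyond the Hilbert-space ($L^2$) setting and rely on deeper $L^p$ results — bounded imaginary powers and the bounded $H^\infty$-calculus for the Neumann Laplacian, or equivalently maximal $L^p$-regularity — to control fractional powers and Riesz transforms on a general bounded domain. This is precisely why the statement is presented as a recap of known theory; I would cite the corresponding results rather than reprove them.
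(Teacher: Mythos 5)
The paper never proves this lemma: it is stated explicitly as a recap of known semigroup results, with the argument delegated entirely to the citation \cite{horstmann2005boundedness}, so there is no internal proof to compare against. Your sketch is a correct reconstruction of the standard arguments behind that citation, and it is faithful to how they are proved in the literature: sectoriality of the shifted Neumann Laplacian gives the analytic semigroup with exponential decay; part 1 is the embedding of fractional-power domains (Henry-type theorem, or the Bessel-potential identification you describe); part 2 follows by splitting the semigroup in time and composing the $L^q$--$L^p$ ultracontractive bound with fractional-power smoothing; and in part 3 the factor $e^{-\mu t}$ does come from the spectral gap on the mean-zero subspace, which contains the range of $\nabla\cdot$ on the dense class of compactly supported smooth fields. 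Two refinements are worth noting. First, your factorization uses $(\Lambda+1)^{-\frac12-\varepsilon}\nabla\cdot$ but you then invoke boundedness of $(\Lambda+1)^{-1/2}\nabla\cdot$; it is the former you need, and it is elementary rather than deep: by duality it is equivalent to boundedness of $\nabla(\Lambda+1)^{-\frac12-\varepsilon}$ on $L^{p'}$, which is exactly part 1 with $m=1$, $q=p'$. This is precisely the point of the $\varepsilon$-loss in the statement — for $\varepsilon>0$ one never needs the genuine Riesz transform, bounded imaginary powers, or the $H^\infty$-calculus you list as the ``hard part''; that machinery would only be required at the endpoint $\varepsilon=0$. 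Second, for general $u\in (L^p(\Omega))^n$ the expression $e^{-t\Lambda}\nabla\cdot u$ only makes sense as the unique bounded extension from that dense class (as the cited lemma states), a point your argument uses implicitly when asserting that $\nabla\cdot u$ is mean-zero and should be made explicit.
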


\begin{remark}
    The above inequalities hold for $D_u \Lambda$ upon rescaling. 
\end{remark}

We state the Sobolev embedding theorem, which is used often throughout this paper.

\begin{lemma}%\label{thm:GNineq} 
\textbf{(Gagliardo-Nirenberg in bounded domains)} 
\cite{nirenberg1959elliptic}
Let $\Omega\subset \mathbb{R}^n$ be a measurable, bounded, open, and connected subset satisfying the cone condition. Let $q\in[1,\infty]$, $p\in[1,\infty)$, $r\in[1,\infty]$, $j\in\mathbb{N}\cup\{0\}$, $m\in\mathbb{N}$, $j<m$, $\lambda\in[0,1]$ be such that the following relations
\begin{equation*}
    \frac{1}{p}=\frac{j}{n}+\lambda\Big(\frac{1}{r}-\frac{m}{n}\Big)+\frac{1-\lambda}{q},\quad \frac{j}{m}\leq \lambda\leq 1
\end{equation*}
hold. Then 
\begin{equation*}
    \norm{D^ju}_{L^p(\Omega)}\leq C\big(\norm{D^mu}_{L^r(\Omega)}^\lambda \norm{u}_{L^q(\Omega)}^{1-\lambda}+\norm{u}_{L^\sigma(\Omega)}\big),
\end{equation*}
where $u\in L^q(\Omega)$, $D^mu\in L^r(\Omega)$, and $\sigma$ is arbitrary.
\end{lemma}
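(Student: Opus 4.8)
The plan is to first prove the scaling-invariant form of the inequality on all of $\mathbb{R}^n$, where the additive term $\norm{u}_{L^\sigma(\Omega)}$ disappears, and then transfer the estimate to the bounded domain $\Omega$ using the cone condition. I would begin by recording \emph{why} the dimensional relation $\tfrac1p=\tfrac jn+\lambda(\tfrac1r-\tfrac mn)+\tfrac{1-\lambda}{q}$ must hold: applying the candidate inequality to the rescaled family $u_t(x):=u(tx)$ and using the homogeneities $\norm{D^j u_t}_{L^p}=t^{\,j-n/p}\norm{D^j u}_{L^p}$, $\norm{D^m u_t}_{L^r}=t^{\,m-n/r}\norm{D^m u}_{L^r}$, and $\norm{u_t}_{L^q}=t^{-n/q}\norm{u}_{L^q}$, invariance in $t>0$ forces exactly this relation. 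This fixes the admissible exponents before any analysis.

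For the whole-space estimate I would induct on the derivative gap $m-j$. The base case rests on two one-dimensional facts: the Gagliardo--Sobolev inequality (for the genuinely embedding regime) and a Landau--Kolmogorov interpolation inequality $\norm{f'}_{L^p(\mathbb{R})}\le C\norm{f''}_{L^r}^{\lambda}\norm{f}_{L^q}^{1-\lambda}$, the latter proved by writing $f'$ via the fundamental theorem of calculus over an interval of length $\ell$ and optimizing in $\ell$ (the interpolation exponent is precisely the optimal choice). I would then tensorize across the $n$ coordinate directions, estimating one partial derivative at a time and recombining the resulting mixed norms with H\"older's inequality (Gagliardo's lemma), and finally iterate in the order of differentiation to bridge a general gap $m-j$. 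Chaining these pieces, with the H\"older exponents dictated by the scaling relation, closes the homogeneous case.

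The transfer to $\Omega$ is where the additive term is born. Since a bounded domain admits no scaling symmetry --- adding any polynomial of degree $<m$ leaves $D^m u$ untouched while changing $D^j u$ --- a purely homogeneous estimate cannot survive, so a lower-order correction is unavoidable. Concretely I would invoke the cone condition to obtain a bounded (Calder\'on--Stein) extension operator $E\colon W^{m,r}(\Omega)\to W^{m,r}(\mathbb{R}^n)$, subtract from $u$ an averaged Taylor polynomial $P$ of degree $<m$ so that $u-P$ obeys a Bramble--Hilbert/Poincar\'e estimate, apply the homogeneous whole-space inequality to $E(u-P)$, and absorb the polynomial part into $\norm{u}_{L^\sigma(\Omega)}$ using that all norms on the finite-dimensional space of low-degree polynomials are equivalent (which is also why $\sigma$ may be taken arbitrarily).

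The step I expect to be the main obstacle is not the mechanical interpolation but the bookkeeping of the \emph{exceptional regime}: when $1<r<\infty$ and $m-j-n/r$ is a non-negative integer, the endpoint $\lambda=1$ genuinely fails and one must restrict to $\tfrac jm\le\lambda<1$. Tracking these excluded exponents carefully is the delicate part of the argument. It is reassuring that the uses of this lemma in the proofs of Lemmas \ref{lem:L2onv} and \ref{lem:L4onv} correspond to $n=2$, $j=0$, $m=1$, $r=2$, $q=1$, $p=2$ with $\lambda=\tfrac12$; this lands in the exceptional parameter regime ($m-j-n/r=0$) but uses $\lambda=\tfrac12<1$, so the forbidden endpoint is safely avoided.
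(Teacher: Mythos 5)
This lemma is not proved in the paper at all: it is imported as a classical result, with the citation to Nirenberg's 1959 paper standing in for the proof, so there is no internal argument to compare yours against. On its own merits, your sketch is a faithful reconstruction of the classical proof strategy: the scaling computation forcing the relation $\tfrac1p=\tfrac jn+\lambda\bigl(\tfrac1r-\tfrac mn\bigr)+\tfrac{1-\lambda}{q}$ is correct; the whole-space argument (one-dimensional interpolation via the fundamental theorem of calculus and optimization in the interval length, then tensorization across coordinates and induction on $m-j$) is essentially Nirenberg's original scheme; and the explanation of why a bounded domain necessarily produces the additive lower-order term (polynomials of degree $<m$ kill $D^m u$ but not $D^j u$) is exactly the right conceptual point, with the finite-dimensionality of that polynomial space accounting for the arbitrariness of $\sigma$. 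Two remarks. First, a technical caveat in your transfer step: Calder\'on's extension theorem under the cone condition requires $1<r<\infty$, so the endpoint cases $r\in\{1,\infty\}$ in the lemma's stated range are not covered by that operator; Stein's extension needs a Lipschitz boundary, and Nirenberg's own treatment of domains instead argues directly on cubes/cones and patches, which is how one avoids this gap. Second, your observation about the exceptional regime is a genuine catch: the statement as quoted in the paper omits the standard proviso that when $1<r<\infty$ and $m-j-\tfrac nr$ is a non-negative integer the value $\lambda=1$ must be excluded, and your verification that the paper's only use of the lemma --- the special case \eqref{thm:GNineq} with $n=2$, $j=0$, $m=1$, $p=r=2$, $q=1$, hence $\lambda=\tfrac12$, as invoked in Lemmas \ref{lem:L2onv} and \ref{lem:L4onv} --- sits inside that exceptional regime but safely away from the forbidden endpoint is correct and is exactly the check a careful reader should make.
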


We state a special case of the Gagliardo-Nirenberg embedding theorem in bounded domains  for $n=2$
\begin{equation}\label{thm:GNineq}
\norm{u}_{L^2(\Omega)}\leq C\big(\norm{Du}_{L^2(\Omega)}^{1/2} \norm{u}_{L^1(\Omega)}^{1/2}+\norm{u}_{L^{1}(\Omega)}\big). 
\end{equation}    

\end{document}